\numberwithin{equation}{section}
\definecolor{goetheblau}{cmyk}{1.00 0.20 00 0.40}
\definecolor{hellgrau}{cmyk}{0.04 0.04 0.05 0.02}
\definecolor{sandgrau}{cmyk}{0.12 0.09 0.13 0}
\definecolor{dunkelgrau}{cmyk}{0.25 0.25 0.30 0.75}
\definecolor{purple}{cmyk}{0.08 1.00 0.30 0.36}
\definecolor{emorot}{cmyk}{0.04 1.00 0.80 0.07}
\definecolor{senfgelb}{cmyk}{0.01 0.25 1.00 0.05}
\definecolor{gruen}{cmyk}{0.62 0.40 0.87 0.09}
\definecolor{magenta}{cmyk}{0.08 0.86 0.12 0.12}
\definecolor{orange}{cmyk}{0 0.70 1.00 0.04}
\definecolor{sonnengelb}{cmyk}{0 0.12 0.95 0}
\definecolor{hellesgruen}{cmyk}{0.40 0.17 0.81 0.07}
\definecolor{lichtblau}{cmyk}{0.80 00 0.06 0.04}
\newtheorem{theorem}{Theorem}[section]
\newtheorem{corollary}[theorem]{Corollary}
\newtheorem{lemma}[theorem]{Lemma}
\newtheorem{proposition}[theorem]{Proposition}
\newtheorem{remark}[theorem]{Remark}
\newcommand{\N}{\mathbb{N}}
\newcommand{\Z}{\mathbb{Z}}
\newcommand{\R}{\mathbb{R}}
\DeclareMathOperator{\Var}{\mathbf{Var}}
\DeclareMathOperator{\EW}{\mathbf{E}}
\DeclareMathOperator{\WS}{\mathbf{P}}
\DeclareMathOperator{\Cov}{\mathbf{Cov}}
\title[Asymptotic Gaussianity  in the Hammond-Sheffield urn]{Asymptotic Gaussianity via coalescence probabilities in the Hammond-Sheffield urn}
\subjclass[2020]{Primary 60G22; Secondary 60K05, 60F17, 60J90}
\keywords{power law Pólya's urn, coalescing renewal processes, randomly coloured random partitions, asymptotic normality, Stein’s method, seedbank coalescent.}
\author[J.~L. Igelbrink]{Jan Lukas Igelbrink}
\address{Jan Lukas Igelbrink  \\ Institut f\"ur Mathematik, 
  Johannes Gutenberg-Universit\"at Mainz and Goethe-Universit\"at Frankfurt, Germany.}
\email{jigelbri@uni-mainz.de}
\author[A. Wakolbinger]{Anton Wakolbinger}
\address{Anton Wakolbinger, Institut f\"ur Mathematik, Goethe-Universit\"at Frankfurt, Germany.}
\email{wakolbinger@math.uni-frankfurt.de}
\begin{document}

\begin{abstract}
 For the renormalised sums of the random $\pm 1$-colouring of the connected components \mbox{of $\mathbb Z$} generated
by the coalescing renewal processes in the ``power law P\'olya's urn'' of Hammond and Sheffield \cite{HS} we prove functional convergence towards fractional Brownian motion, closing a gap in the tightness argument of their paper.

In addition, in the regime of the strong renewal theorem we gain insights into the coalescing renewal processes in the Hammond-Sheffield urn (such as the asymptotic depth of most recent common ancestors) and are able to control the coalescence probabilities of two, three and four individuals that are randomly sampled from $[n]$. This allows us to obtain a new, conceptual proof of the asymptotic Gaussianity (including the functional convergence) of the renormalised sums of more general colourings, which can be seen as an invariance principle beyond the main result of  \cite{HS}.
 
In this proof, a key ingredient of independent interest is a sufficient criterion for the asymptotic Gaussianity of the renormalised sums in randomly coloured random partitions of $[n]$, based on Stein’s method.
 
 Along the way we also prove a statement on the asymptotics of the coalescence probabilities in the long-range seedbank model of Blath, Gonz\'alez Casanova, Kurt, and Span\`o, see \cite{BGKS}.
\end{abstract}
\maketitle
\noindent
\newpage
\tableofcontents
\section{Introduction}\label{secIntro}
\noindent
 We start with a brief description of the model of \cite{HS} and then state our main results together with a short outline of the paper.
 
For $0<\alpha < \tfrac12 $ and a slowly varying function $L: \R \rightarrow \R^+$ let $\mu:=\mu_{\alpha, L}$ be a probability measure on $\N=\{1,2,\ldots\}$ having the power law tails
\begin{equation}\label{basic}
 \mu\left(\left\{n,n+1,\ldots\right\}\right) \sim n^{-\alpha} L(n) \mbox { as } n\to \infty,
\end{equation}
with the usual convention that for two sequences $f(n)$, $g(n)$ of real numbers
\begin{equation*}
  f(n) \sim g(n) \mbox{ as }n\rightarrow \infty \end{equation*}means that $\lim_{n\to \infty} f(n)/g(n) = 1$.
Throughout it will be assumed that 
\begin{equation}\label{nonlattice}
 \mbox{the greatest common divisor of } \left\{n\in \N: \mu(n) > 0\right\} \mbox{ is one.}
 \end{equation}
Let $R$ be an $\mathbb N$-valued random variable with distribution $\mu$. A random directed graph $G_\mu$ with vertex set $\Z$ is generated in the following way: Let $\left(R_i\right)_{i\in\Z}$ be a family of independent copies of $R$. The random set of edges $E\left( \mathcal G_\mu \right)$ is then given by
\begin{equation*}
  E\left( \mathcal G_\mu \right):= \left\{ \left(i, i- R_i\right): i\in\Z \right\}.
\end{equation*}
This induces the random equivalence relation
\begin{equation}\label{simdef}
  i \sim j :\Longleftrightarrow i \mbox{ and }j \mbox { belong to the same connected component of } \mathcal G_\mu.
\end{equation}
Note that the symbol $\sim$  is used in \eqref{basic} and \eqref{simdef} in two different meanings; this will cause no risk of confusion.

 For $i \in \mathbb Z$ the  connected component containing  $i$ is denoted by $\mathscr C_i$. 
The random variables $(R_i)_{i\in \mathbb Z}$ give rise to {\em coalescing renewal processes} starting from the integers; see Section~\ref{secBGKS}  for an interpretation (and extension) in terms of the long-range seedbank model of \cite{BGKS}. In this terminology $\mathcal G_\mu$ is the graph of ancestral lineages of the individuals $i \in \mathbb Z$, and the component~$\mathscr C_i$ consists of all $j\in \mathbb Z$ that are related to $i$, see Figure~\ref{fig:randomgraph} for an illustration. 
\begin{figure}
  \centering
  \includegraphics[width=\textwidth]{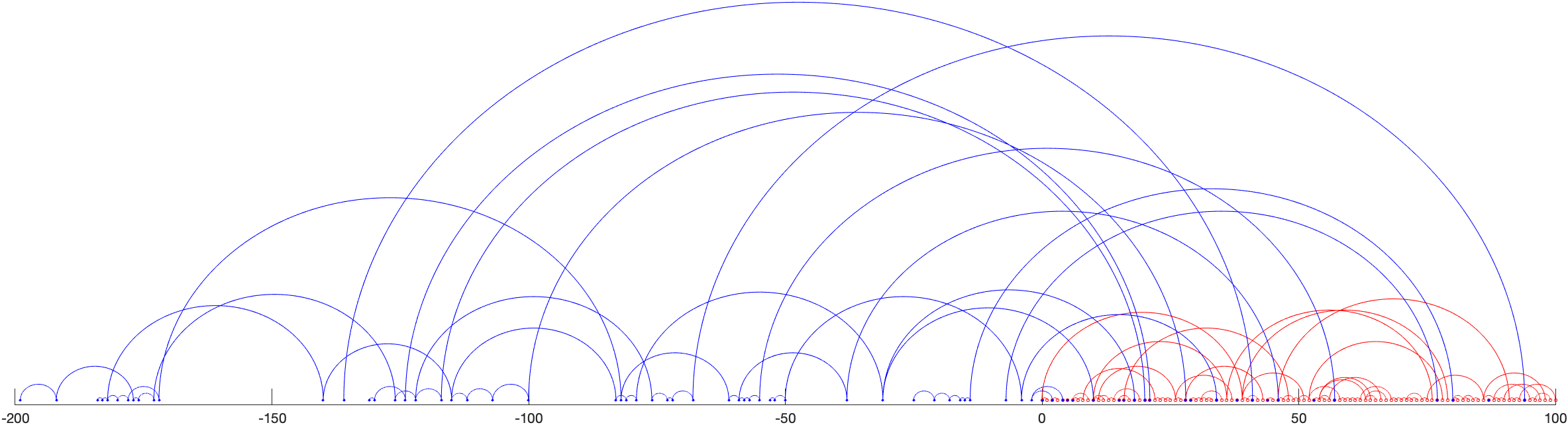}
  \caption{A realisation of the ancestral lineages of the individuals $\{0,\ldots, 100\}$ traced back till $-200$. Each of the arcs corresponds to an edge of $G_\mu$. All the outgoing edges  from $i=0,\ldots, 100$ which map to an individual in $\{0,\ldots, 100\}$ are drawn (in red), whereas for $i$ between $-200$ and $-1$ only those outgoing edges are drawn (in blue) that belong to an ancestral lineage of some $j \in \{0,\ldots, 100\}$.
 Here the exponent $\alpha$ in \eqref{basic} was chosen as $0.39$.
  }
  \label{fig:randomgraph}
\end{figure}
The probability that $0$ belongs to the ancestral lineage of $n$ is thus given by the weight assigned to $n$ by the renewal measure,
\begin{equation}\label{defqn}
 q_n := \WS \left( \tilde R_1+\ldots+ \tilde R_j =n \mbox{ for some }j\geq 0 \right)
\end{equation}
with $\tilde R_1, \tilde R_2,\ldots$ being  independent copies of $R$. (Note that $\WS(0 \sim n)$ is in general larger than $q_n$ because $0$ and $n$ may be related to each other even if $0$ is not an ancestor of $n$.)
 
Hammond and Sheffield suggest the picture of an urn in which the types of the individuals $i$ are determined recursively: each individual~$i \in \mathbb Z$ inherits the type (or ``colour'') of its parent $i-R_i$. With $\{+1, -1\}$ as  the set of colours, they show that the set of random colourings of $\mathbb Z$ that are consistent with $\mathcal G_\mu$ has a Gibbs structure, with the extremal elements being given by i.i.d.~assignments of colours to the connected components of $\mathcal G_\mu$. The main result of \cite{HS} concerns the asymptotics of the rescaled sum  over the types of the individuals $1,\ldots, \lfloor tn \rfloor $, $t\ge 0$, which as $n \to \infty$ turns out to converge to fractional Brownian motion. The individuals' types arise as follows:

Assume that each component of $\mathcal G_\mu$ gets coloured by an independent copy of a real-valued random variable $Y$.
In the situation of  \cite{HS}, $Y$ is a centered Rademacher$(p)$ variable, i.e.
\begin{equation}
\label{Rade} Y = \xi-(2p-1) \mbox{ with } \WS(\xi=+1)=p, \,\WS(\xi=-1)=1-p.
\end{equation}
For $i\in \mathbb Z$ the colour of the component  $\mathscr C_i$ will be denoted by $Y_i$. 
Define the ``random walk'' (with dependent increments)
\vspace{-0.3cm}
\begin{equation}\label{eq:Sndef}
  S_n := \sum_{i=1}^n Y_i,  \quad n=0,1,\ldots.
\end{equation}
\vspace{-0.1cm}
By construction, 
\begin{equation} \label{sigman}\sigma_n^2 := \Var[S_n] = \sum_{i,j \in [n]} \Cov[Y_i,Y_j] = \EW[Y^2] \, \sum_{i,j \in [n]} \WS(i \sim j).
\end{equation}
\cite[Lemma 3.1]{HS} show by Fourier and Tauberian arguments that 
\begin{equation} \label{asvar}
\sum\limits_{i,j \in [n]} \WS(i\sim  j) \sim \frac{C_\alpha}{\alpha(2\alpha+1)} \frac{n^{2\alpha +1}}{L(n)^2} \quad \mbox{ as } n\to \infty,
\end{equation}
with 
\vspace{-0.2cm}
\begin{equation}\label{Calpha}
     C_\alpha := \frac 1{\sum_{m\ge 0} q_m^2} \frac{\Gamma(1-2\alpha)}{\Gamma(\alpha)\Gamma(1-\alpha)^3}.
     \end{equation}
     We will obtain \eqref{asvar} as a corollary of Proposition \ref{probrel} below, which requires the additional condition
     \begin{equation}\label{qas}
  q_n \sim \frac{1}{\Gamma(\alpha)\Gamma(1-\alpha)} \frac{n^{\alpha -1}}{L(n)} \quad \mbox{as } n\rightarrow \infty.
\end{equation}
This condition, which also appears in our Theorem \ref{mainth}, is equivalent to the validity of the Strong Renewal Theorem for the renewal process with an increment distribution $\mu$ satisfying  \eqref{basic} and \eqref{nonlattice}, see  \cite{CaravennaDoney}, whose Theorem~1.4 gives necessary and sufficient conditions in terms of $\mu$ for the validity of \eqref{qas}. A well-known sufficient condition for \eqref{qas} is the criterion of \cite{D} \begin{equation} \label{Doney}
 \sup_{n \ge  1} \frac{n\WS(R = n)}{\WS(R > n)}<\infty.
\end{equation}

For $\frac {i-1}n \le t \le \frac{i}n$, $i,n \in \mathbb N$,  let $S^{(n)}(t)$ be the linear interpolation of $S_i/\sigma_n$ and $S_{i+1}/\sigma_n$.
 Because of \eqref{sigman} and \eqref{asvar}, for all $t\ge 0$,
$$\Var\left[S^{(n)}(t)\right] \to t^{2\alpha +1} \quad \mbox {as } n\to \infty.$$
Since $(S_n)_{n\in \N_0}$ has stationary increments by construction, this implies the convergence
\begin{equation*}\Cov\left(S_s^{(n)}, S_t^{(n)}\right) \stackrel[n\to \infty]{}\longrightarrow \frac 12\left(  s^{2\alpha +1} + t^{2\alpha +1} -  |t-s|^{2\alpha+1} \right), \quad s,t \ge 0.\end{equation*}
The right-hand side is the covariance function of {\em fractional Brownian motion with Hurst parameter \mbox{$H=\tfrac12+\alpha$}}, which is the unique centered Gaussian process with  variance function $t^{2H}$, $t\ge 0$, stationary increments and a.s.~continuous paths.
The processes $S^{(n)}$ are centered as well. Thus, in order to prove that $S^{(n)}$ converges as $n\to \infty$ (in the sense of finite dimensional distributions) to 
fractional Brownian motion with Hurst parameter $H$, it only remains to show that the finite dimensional distributions of~$S^{(n)}$ are asymptotically Gaussian. This is provided by 
\begin{theorem}\label{mainth} Let $\mu$ be a probability measure on $\N$ satisfying \eqref{basic} and \eqref{nonlattice}. Assume  one of the following conditions \ref{theoremmain:A} or \ref{theoremmain:B}:
  \begin{enumerate}[label=(\Alph*)]
\item The colouring $Y$ is given by ~\eqref{Rade}.\label{theoremmain:A} 
\item The weights $q_n$ of the renewal measure specified in \eqref{defqn} satisfy  the asymptotics 
\eqref{qas},
and the  
colouring $Y$ obeys
\begin{equation}\label{fourth}
\EW\left[Y\right] = 0 \mbox{ and } 0< \EW\left[Y^4\right] < \infty. \end{equation} \label{theoremmain:B}
\end{enumerate}
\vspace{-0.3cm}
Then,
for any fixed  $d \in \mathbb N$ and  fixed  $0< t_1 < \cdots < t_{d}^{}<\infty$,  the sequence $\left(S_{\lfloor t_1 n\rfloor}, \ldots, S_{\lfloor t_{d} n\rfloor }\right)_{n\in \N}$ is asymptotically Gaussian as $n \to \infty$.
\end{theorem}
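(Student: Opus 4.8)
The plan is to establish asymptotic Gaussianity of each fixed linear combination $\sum_{k=1}^d a_k S_{\lfloor t_k n\rfloor}$ and then invoke the Cramér–Wold device; since the limiting covariances are already identified (the fractional-Brownian-motion kernel computed in the excerpt), this upgrades to joint convergence of the finite-dimensional distributions. Writing $m:=\lfloor t_d n\rfloor$ and $c^{(n)}_i:=\sum_{k:\,\lfloor t_k n\rfloor\ge i}a_k$, each such combination equals $T_n:=\sum_{i=1}^m c^{(n)}_i Y_i$, a weighted sum with deterministic, uniformly bounded weights, $|c^{(n)}_i|\le\sum_k|a_k|$. The decisive structural observation is that, conditionally on the graph $\mathcal G_\mu$ (equivalently, on the partition $\mathscr P_n$ of $[m]$ into the traces $\mathscr C_i\cap[m]$ of the connected components), one has $T_n=\sum_{B\in\mathscr P_n}Y_B\,w_B$ with block weights $w_B:=\sum_{i\in B}c^{(n)}_i$, where the colours $Y_B$ are i.i.d.\ copies of $Y$, independent of $\mathscr P_n$. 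Thus, conditionally, $T_n$ is a sum of \emph{independent} centred variables with conditional variance $\EW[Y^2]\,V_n$, where $V_n:=\sum_{B}w_B^2=\sum_{i,j\in[m]}c^{(n)}_ic^{(n)}_j\,\mathbf 1\{i\sim j\}$ is $\mathcal G_\mu$-measurable.

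To this conditional sum I would apply a quantitative central limit theorem for independent summands — a Berry–Esseen bound, equivalently the Stein-method criterion for randomly coloured random partitions announced in the abstract — obtaining
\begin{equation*}
 \sup_{x}\Bigl|\WS\bigl(T_n\le x\sqrt{\EW[Y^2]V_n}\ \big|\ \mathcal G_\mu\bigr)-\Phi(x)\Bigr|
 \ \le\ C\,\EW|Y|^3\,\frac{\sum_{B}|w_B|^3}{\bigl(\EW[Y^2]\,V_n\bigr)^{3/2}},
\end{equation*}
where $\EW|Y|^3<\infty$ by \eqref{fourth} (and trivially in case~\ref{theoremmain:A}). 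Asymptotic normality of $T_n$ after the deterministic normalisation $\tau_n:=\sqrt{\Var[T_n]}=\sqrt{\EW[Y^2]\,\EW[V_n]}$ then follows from two probabilistic estimates: (i) \emph{concentration of the conditional variance}, $V_n/\EW[V_n]\to1$ in probability; and (ii) \emph{vanishing of the Lyapunov ratio}, $\sum_B|w_B|^3/\EW[V_n]^{3/2}\to0$ in probability. Indeed, (i) and (ii) force the right-hand side above to $0$, so integrating over $\mathcal G_\mu$ (dominated convergence, using that the Kolmogorov distance is $\le1$) gives $T_n/\sqrt{\EW[Y^2]V_n}\Rightarrow N(0,1)$, while (i) yields $\sqrt{\EW[Y^2]V_n}/\tau_n\to1$, and a Slutsky argument concludes $T_n/\tau_n\Rightarrow N(0,1)$. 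Note that (i) is genuinely needed: a fluctuating conditional variance would produce a variance mixture of Gaussians rather than a Gaussian.

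Both (i) and (ii) reduce to coalescence-probability sums, which is where the substance of the paper enters. For (ii) I would bound $\EW\sum_B|w_B|^3\le(\max_i|c^{(n)}_i|)^3\sum_{i,j,k\in[m]}\WS(i\sim j\sim k)$ and invoke the three-point coalescence asymptotics to show this is $o(\EW[V_n]^{3/2})$; Markov together with (i) then gives (ii). For (i) I would expand
\begin{equation*}
 \Var[V_n]=\sum_{i,j,k,l\in[m]}c^{(n)}_ic^{(n)}_jc^{(n)}_kc^{(n)}_l\,\bigl(\WS(i\sim j,\,k\sim l)-\WS(i\sim j)\WS(k\sim l)\bigr),
\end{equation*}
the point being that the leading $n^{2(2\alpha+1)}$ contribution cancels in the pair-correlation and what remains must be shown to be $o(\EW[V_n]^2)\asymp o\bigl(n^{2(2\alpha+1)}/L(n)^4\bigr)$, using the two-, three- and four-point coalescence asymptotics. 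In regime~\ref{theoremmain:B} these are precisely what the strong-renewal hypothesis \eqref{qas} delivers through the analysis of the coalescing renewal processes; in the bounded regime~\ref{theoremmain:A} all moments of $Y$ are automatic, and the required coalescence bounds are available from \eqref{basic}–\eqref{nonlattice} (two-point already being \eqref{asvar}).

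The main obstacle is exactly the control of the three- and four-point quantities $\sum\WS(i\sim j\sim k)$ and $\sum\WS(i\sim j,\,k\sim l)$. Unlike the two-point sum \eqref{asvar}, these are not accessible by a single Fourier–Tauberian computation, because ``$i\sim j$'' is strictly weaker than ``$i$ is an ancestor of $j$'' (as noted after \eqref{defqn}), so several renewal lineages must be followed simultaneously until they merge. Proving that four independently sampled points coalesce rarely enough — quantitatively, $\sum_{i,j,k,l\in[m]}\WS(i\sim j\sim k\sim l)=o(\tau_n^4)$, together with the decoupling of the pair-correlations at leading order — is the heart of the matter and the step on which the Gaussian conclusion rests. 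Heuristically it should hold because a size-biased block has length of order $n^{2\alpha}$ with $2\alpha<1$, whence $\EW\sum_B|B|^4\asymp n^{1+6\alpha}=o(n^{2+4\alpha})$ and $\EW\sum_B|B|^3\asymp n^{1+4\alpha}=o(n^{3\alpha+3/2})$; converting these heuristics into rigorous bounds is what requires the detailed renewal estimates developed in the paper.
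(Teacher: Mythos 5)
Your route for case \ref{theoremmain:B} --- condition on the partition, apply the classical Berry--Esseen inequality to the conditionally independent block sums $\sum_B w_B Y_B$, then decondition via concentration of the conditional variance $V_n$ --- is a legitimate variant of the paper's argument, and it rests on exactly the same three structural inputs: the triplet bound \eqref{trip} for the Lyapunov ratio, and the quartet bound \eqref{quadr} together with the covariance estimate \eqref{Covineq} for $\Var[V_n]=o\left(\EW[V_n]^2\right)$. The paper instead feeds these inputs into an \emph{unconditional} Stein estimate (Proposition \ref{propYnew}, derived from Stein's Lecture X theorem), whose error term involves $\Var\left[\sum_{i,j} Y_i^2 a_i a_j I_{\{i\sim j\}}\right]$ and hence forces $\EW[Y^4]<\infty$; your conditional route would in fact get by with $\EW|Y|^3<\infty$, since $V_n$ is partition-measurable --- a genuine (if small) gain. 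But two steps you leave implicit must be supplied. First, Lemma \ref{Covest} is a \emph{one-sided} bound, while your weights $c_i^{(n)}$ are signed; to bound $\Var[V_n]$ you must first split the weights into positive and negative parts (four sums with nonnegative coefficients, to each of which the one-sided bound applies). Second, and more seriously, every one of your ratio estimates silently assumes $\EW[V_n]=\sum_{i,j}c_i^{(n)}c_j^{(n)}\WS(i\sim j)$ has the full order $n^{2\alpha+1}/L(n)^2$; for signed weights cancellation could a priori destroy this, and then neither (i) nor (ii) follows from the triplet/quartet bounds. This is precisely condition \eqref{variancecomp} of Theorem \ref{asGausscolored}, which the paper verifies in Section \ref{secmainproof} via the positive definiteness of the Riesz kernel $|x-y|^{2\alpha-1}$; your proposal never addresses it (a dichotomy handling a possibly degenerate limit would also do, but some argument is needed).

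The one outright error concerns case \ref{theoremmain:A}. You assert that there ``the required coalescence bounds are available from \eqref{basic}--\eqref{nonlattice}''; they are not. Proposition \ref{3and4}, and already the pointwise pair asymptotics of Proposition \ref{probrel} on which its proof is built, are derived from the strong renewal asymptotics \eqref{qas} through the representation \eqref{connection}; under \eqref{basic} and \eqref{nonlattice} alone only the \emph{summed} asymptotics \eqref{asvar} is available by Fourier--Tauberian arguments, and Remark \ref{consequence}\ref{remark:consequenceb} even conjectures that \eqref{assim} generically fails when \eqref{qas} does. This is exactly why the paper proves case \ref{theoremmain:A} by a completely different route, namely Hammond--Sheffield's martingale CLT, which exploits the specific Rademacher form \eqref{Rade} of the colouring. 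So, modulo citing the paper's renewal estimates (which you explicitly defer to, fairly enough, since the paper's own proof of the theorem also invokes them as external ingredients), your proposal yields case \ref{theoremmain:B} once the signed-weight and non-degeneracy points above are patched, but it leaves case \ref{theoremmain:A} unproved.
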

Under assumption \ref{theoremmain:A} of Theorem \ref{mainth},  for each fixed $t > 0$ asymptotic Gaussianity of $S_{\lfloor t n\rfloor}$ as $n\to \infty$  is proved in \cite{HS} via a martingale central limit theorem. The computations which ensure the applicability of the martingale CLT are quite subtle and involved,  making substantial use of the specific form \eqref{Rade} of the colouring of the random graph $\mathcal G_\mu$. In  \cite{HS} it is not explicitly discussed whether these arguments also carry over to the joint asymptotic Gaussianity of  $S_{\lfloor t_1 n\rfloor}, \ldots, S_{\lfloor t_m n\rfloor }$ for fixed $t_1 < \cdots < t_m$. However, by applying the martingale CLT to linear combinations of these random variables one can check  that this is indeed the case.

Under assumption \ref{theoremmain:B} we give a new, conceptual proof of the asymptotic Gaussianity of the finite dimensional distributions of  $S^{(n)}$. This proof, which is completed in Section~\ref{secmainproof}, is based on  insights into the structure of $\mathcal G_\mu$ which are stated in Section~\ref{secHScoal} and proved in Sections~\ref{Proofprobrel}-\ref{ProofCovest}. A key ingredient in the new proof is Theorem~\ref{asGausscolored}, which provides a criterion for the asymptotic Gaussianity in randomly coloured random partitions also in a more general setting.   Proposition \ref{propYnew}, which is instrumental in the proof of Theorem~\ref{asGausscolored}, is based on Stein's method and yields the closeness of the distribution of $S_n/\sigma_n$ to the standard normal distribution in terms of a bound  that involves $\Var \left[Y^2\right]$; this explains the finiteness condition of $\EW\left[Y^4\right]$ in \eqref{fourth}. 

Let us also mention that the loss of ground which comes with assuming the ``strong renewal'' condition \eqref{qas}  in addition to~\eqref{basic} and \eqref{nonlattice} seems rather minor. 
Indeed it becomes clear from the examples in \cite[Section~10]{CaravennaDoney} that the class of measures $\mu$ which satisfy~\eqref{basic} and \eqref{nonlattice} but fail to satisfy \eqref{qas} is rather special.

On the other hand, the benefit of assuming \eqref{qas} is  twofold. Firstly, it allows a direct analysis of asymptotic properties of the genealogy of the coalescing renewal processes in the Hammond-Sheffield urn, see Propositions \ref{probrel},  \ref{3and4} and \ref{propMRCA}. Secondly, this opens the way to a two-step analysis (first of the random partition of $\mathbb Z$, then of its random colouring) which allows to derive the ``invariance principle'' stated in Theorem \ref{mainth}\ref{theoremmain:B}.  
\\\\
\indent The following implication of Theorem \ref{mainth} is immediate from its introductory discussion.
\begin{corollary}\label{findim} Under assumptions \ref{theoremmain:A} or \ref{theoremmain:B}, 
$S^{(n)}$ converges as $n\to \infty$  in the sense of finite dimensional distributions to  fractional Brownian motion with Hurst parameter $H=\tfrac12+\alpha$.
\end{corollary}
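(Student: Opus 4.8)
The plan is to assemble the ingredients already prepared in the introduction together with Theorem~\ref{mainth}, since the statement is purely a matter of combining (asymptotic) Gaussianity with the known limiting covariance structure. By the very definition of convergence in the sense of finite-dimensional distributions, I must show that for each fixed $d \in \mathbb{N}$ and $0 < t_1 < \cdots < t_d < \infty$ the random vector $\left(S^{(n)}(t_1), \ldots, S^{(n)}(t_d)\right)$ converges in distribution, as $n \to \infty$, to the centered Gaussian vector whose covariance matrix $\Sigma = (\Sigma_{kl})$ has entries $\Sigma_{kl} = \tfrac12\left(t_k^{2H} + t_l^{2H} - |t_k - t_l|^{2H}\right)$ with $H = \tfrac12 + \alpha$; this matrix is precisely the covariance of $\left(B^H_{t_1}, \ldots, B^H_{t_d}\right)$ for fractional Brownian motion $B^H$.

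First I would reduce the interpolated process to the sampled sums. Since $S^{(n)}(t)$ equals $S_{\lfloor tn \rfloor}/\sigma_n$ up to a linear interpolation correction whose modulus is bounded by a single increment $|Y_j|/\sigma_n$, and since $\sigma_n \to \infty$ while $Y$ has finite second moment (bounded in case~\ref{theoremmain:A}, square-integrable in case~\ref{theoremmain:B}), each such correction tends to $0$ in $L^2$, hence in probability. By Slutsky's theorem it therefore suffices to prove the asserted convergence for the vectors $\left(S_{\lfloor t_1 n \rfloor}/\sigma_n, \ldots, S_{\lfloor t_d n \rfloor}/\sigma_n\right)$.

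Next I would invoke Theorem~\ref{mainth}, which under either hypothesis yields that $\left(S_{\lfloor t_1 n \rfloor}, \ldots, S_{\lfloor t_d n \rfloor}\right)$ is asymptotically Gaussian; equivalently, after normalising by $\sigma_n$, every linear combination $\sum_{k} \lambda_k S_{\lfloor t_k n \rfloor}/\sigma_n$ is asymptotically Gaussian via the Cram\'er--Wold device. On the other hand, the introductory computation based on \eqref{sigman}, \eqref{asvar} and the stationarity of the increments of $(S_n)$ shows that the covariance matrix of $\left(S_{\lfloor t_1 n \rfloor}/\sigma_n, \ldots, S_{\lfloor t_d n \rfloor}/\sigma_n\right)$ converges entrywise to $\Sigma$. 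Since a sequence of centered, asymptotically Gaussian random vectors whose covariance matrices converge to a limit $\Sigma$ converges in distribution to the centered Gaussian law $N(0,\Sigma)$, and since this limit law is uniquely determined by $\Sigma$, the desired finite-dimensional convergence follows.

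I do not expect any genuine obstacle here: all the analytic work, namely the variance asymptotics \eqref{asvar} and, above all, the asymptotic Gaussianity in Theorem~\ref{mainth}, has already been carried out, and the corollary merely repackages these facts. The one point that requires a line of care is the passage from the interpolated process to the sampled sums, but this is immediate from the boundedness of the interpolation error and $\sigma_n \to \infty$; identifying the Gaussian limit with fractional Brownian motion is then just the observation that $\Sigma$ is the fBm covariance with $H = \tfrac12 + \alpha$.
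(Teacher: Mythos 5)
Your proposal is correct and takes essentially the same route as the paper, which declares Corollary~\ref{findim} ``immediate from its introductory discussion'' --- i.e.\ precisely your combination of the covariance convergence obtained from \eqref{sigman}, \eqref{asvar} and the stationarity of increments with the asymptotic Gaussianity supplied by Theorem~\ref{mainth}. Your explicit Slutsky argument for the linear-interpolation error is the only step the paper leaves tacit, and you handle it correctly.
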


The next result, which will be proved in Section \ref{tight}, amends the proof of \cite[Lemma 4.1]{HS}, see Remark \ref{HSLemma}. Here, for each $n \in \mathbb N$ and $T>0$,   $\left(S^{(n)}(t)\right)_{0\le t\le T}$ is viewed as a random variable taking its values in $C\left([0,T]; \mathbb R\right)$, the space of continuous functions from $[0,T]$ to~$\mathbb R$, equipped with the $\sup$-norm.
\begin{proposition}\label{proptight} Under the assumptions of Theorem \ref{mainth},
for all $T>0$ the sequence of random variables $\left(S^{(n)}(t)\right)_{0\le t\le T}$ is tight.
\end{proposition}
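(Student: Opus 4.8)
The plan is to establish tightness through the classical moment criterion for weak convergence in $C([0,T];\mathbb R)$ (Kolmogorov--Chentsov, in the version for sequences of processes due to Billingsley): it suffices to exhibit constants $\gamma>0$, $\beta>0$ and $C<\infty$, \emph{not depending on $n$}, such that
\begin{equation*}
\EW\left[\left|S^{(n)}(t)-S^{(n)}(s)\right|^{\gamma}\right]\le C\,|t-s|^{1+\beta}\qquad\text{for all } s,t\in[0,T],
\end{equation*}
together with tightness of $\left(S^{(n)}(0)\right)_n$, which is trivial since $S^{(n)}(0)=0$. I would work with the \emph{second} moment, i.e.\ $\gamma=2$. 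The crucial structural point is that the limiting Hurst index satisfies $2\alpha+1>1$, so a second-moment bound with exponent close to $2\alpha+1$ already exceeds the threshold $1$ required by the criterion; no higher moments, and hence no distinction between the colourings in \ref{theoremmain:A} and \ref{theoremmain:B}, are needed for tightness.

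By the stationarity of the increments of $(S_n)$ and \eqref{sigman}, for integers $0\le a\le b$,
\begin{equation*}
\EW\left[(S_b-S_a)^2\right]=\EW\left[Y^2\right]\sum_{i,j\in\{a+1,\dots,b\}}\WS(i\sim j)=\EW\left[Y^2\right]\,\Sigma(b-a),\qquad \Sigma(m):=\sum_{i,j\in[m]}\WS(i\sim j).
\end{equation*}
Since $\EW[Y^2]<\infty$ in both cases of Theorem \ref{mainth}, the problem reduces to a \emph{deterministic, colouring-independent} estimate on the non-decreasing sequence $\Sigma(m)$. By \eqref{asvar}, which rests only on \eqref{basic} and \eqref{nonlattice} and hence is available also in case \ref{theoremmain:A}, the sequence $\Sigma$ is regularly varying of index $2\alpha+1\in(1,2)$, with $\Sigma(m)\sim \tfrac{C_\alpha}{\alpha(2\alpha+1)}\,m^{2\alpha+1}/L(m)^2$. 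The heart of the matter is then to turn this \emph{asymptotic} statement into a \emph{uniform ratio bound}
\begin{equation*}
\frac{\Sigma(m)}{\Sigma(n)}\le C\left(\frac{m}{n}\right)^{\rho}\qquad\text{for all }1\le m\le n,\ n\ge n_0,
\end{equation*}
with a fixed $\rho\in(1,2\alpha+1)$. Granting this, the bound $\EW[(S_b-S_a)^2]/\sigma_n^2\le C\,((b-a)/n)^{\rho}$ together with the interpolation estimate below yields the required moment inequality with $\gamma=2$ and $1+\beta=\rho>1$.

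For the uniform ratio bound I would split according to the scale of $m$. When $m$ is beyond a fixed threshold $m_0$, Potter's bounds applied to the slowly varying factor $\ell(m):=\Sigma(m)/m^{2\alpha+1}\sim c/L(m)^2$ give $\ell(m)/\ell(n)\le C\,(m/n)^{-\delta}$ for $m\le n$ and any prescribed $\delta>0$, so that $\Sigma(m)/\Sigma(n)\le C\,(m/n)^{2\alpha+1-\delta}$; choosing $\delta<2\alpha+1-\rho$ closes this case. In the complementary near-diagonal regime of bounded $m$, corresponding to increments $|t-s|$ of order $1/n$, Potter's theorem no longer applies because $L$ is sampled at bounded arguments; here I would instead use the crude bound $\Sigma(m)\le m^2$ against $\Sigma(n)\ge c\,n^{2\alpha+1}/L(n)^2$, and observe that $m^2 L(n)^2/n^{2\alpha+1}\le C\,(m/n)^{\rho}$ for large $n$ precisely because $\rho<2\alpha+1$ forces $n^{2\alpha+1-\rho}$ to dominate the slowly varying factor $L(n)^2$. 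It remains to control the linear interpolation: within a single grid cell the difference $S^{(n)}(t)-S^{(n)}(s)$ is a fraction of one increment $Y_{\cdot}/\sigma_n$, whose second moment equals $(t-s)^2 n^2\EW[Y^2]/\sigma_n^2$, and the same bookkeeping with $|t-s|\le 1/n$ again yields $\le C|t-s|^{\rho}$. Finitely many small $n$ contribute tight, indeed a.s.\ continuous, processes and may be discarded.

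I expect the \textbf{main obstacle} to be exactly the passage from the leading-order variance asymptotics \eqref{asvar} to a bound uniform across \emph{all} scales $1\le m\le n$ simultaneously, and in particular the small-increment regime $|t-s|\asymp 1/n$, where $L$ is evaluated at bounded arguments and can no longer be compared to $L(n)$. Taming this slowly varying discrepancy by a genuine power, via Potter-type estimates on one side and the crude near-diagonal bound on the other, is the quantitative content that a mere pointwise limit of $\Var[S^{(n)}(t)]$ does not supply; this is precisely the point addressed by Remark \ref{HSLemma} in connection with \cite[Lemma 4.1]{HS}.
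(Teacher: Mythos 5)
Your proposal is correct, and it rests on the same quantitative core as the paper's proof — only second moments are needed (so the colourings \ref{theoremmain:A} and \ref{theoremmain:B} need not be distinguished), \eqref{asvar} (valid under \eqref{basic} and \eqref{nonlattice} alone) supplies the uniform two-sided bounds $c\,n^{2\alpha+1}L(n)^{-2}\le \sigma_n^2\le c'\,n^{2\alpha+1}L(n)^{-2}$ (the paper's \eqref{eq:69}), stationarity of increments reduces everything to the ratio $\Sigma(m)/\Sigma(n)$, and the sub-grid regime $|t-s|\lesssim 1/n$ gets a separate interpolation argument — but it proceeds through a genuinely different criterion. The paper applies Billingsley's product-moment condition (Lemma \ref{satz:billingsley}) to $\EW\left[|S^{(n)}(t)-S^{(n)}(s)|\cdot|S^{(n)}(u)-S^{(n)}(t)|\right]$, bounds it via Cauchy--Schwarz by the geometric mean of the variance estimates \eqref{eq:72_3} and \eqref{eq:75}, keeps the sharp exponent $1+2\alpha$ with a linear $F$, and — since that lemma concludes convergence in $C([0,T])$ rather than bare tightness — must also feed in the finite-dimensional convergence from Corollary \ref{findim}. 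You instead use the single-increment Kolmogorov--Chentsov criterion with $\gamma=2$, deliberately sacrificing part of the exponent: the uniform ratio bound $\Sigma(m)/\Sigma(n)\le C(m/n)^{\rho}$ with a fixed $\rho\in(1,2\alpha+1)$, obtained via Potter bounds at large scales and the crude bound $\Sigma(m)\le m^2$ against $\Sigma(n)\ge c\,n^{2\alpha+1}/L(n)^2$ at bounded scales. What your route buys: tightness is decoupled from any identification of the limit, and the passage from the asymptotics \eqref{asvar} to an estimate uniform over \emph{all} scales $1\le m\le n$ — which in the paper is dispatched by brief appeals to slow variation of $L$ (e.g.\ bounding ratios such as $L(n)^2/\left(L(k-j)L(l-k)\right)^2$ by a constant, a step that itself implicitly needs Potter-type control when $k-j$ or $l-k$ stays bounded) — is made fully explicit; you correctly identify this as the one point where the pointwise limit of $\Var[S^{(n)}(t)]$ does not suffice. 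What the paper's route buys: no auxiliary parameter $\rho$ and the unweakened H\"older exponent $1+2\alpha$. Your treatment of the remaining details is sound: the within-cell bound $n^2(t-s)^2\,\EW[Y^2]/\sigma_n^2\le C|t-s|^{\rho}$ for $|t-s|\le 1/n$ uses exactly the lower bound on $\sigma_n^2$, straddling increments are handled by the triangle inequality, and the finitely many indices $n<N$ may be discarded since each single continuous process is tight.
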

A direct consequence of Corollary \ref{findim} and Proposition~\ref{proptight} is
\begin{corollary}\label{funcconv} Under the assumptions of Theorem \ref{mainth},
$S^{(n)}$ converges in distribution (with respect to the topology of locally uniform convergence) to  fractional Brownian motion with Hurst parameter $H=\tfrac12+\alpha$.
\end{corollary}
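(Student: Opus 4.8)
The plan is to deduce the corollary from the classical characterisation of weak convergence on path space: a tight sequence whose finite-dimensional distributions converge must converge weakly to the process carrying those finite-dimensional distributions. Concretely, I would fix $T>0$ and work first on $C([0,T];\R)$ equipped with the sup-norm, and only afterwards let $T\to\infty$.

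First I would invoke Proposition \ref{proptight}: for each $T>0$ the family $\{(S^{(n)}(t))_{0\le t\le T}:n\in\N\}$ is tight as a sequence of $C([0,T];\R)$-valued random variables. By Prokhorov's theorem, tightness yields relative compactness, so along any subsequence one can extract a further subsequence along which $(S^{(n)}(t))_{0\le t\le T}$ converges in distribution to some limit $\Xi$, a random element of $C([0,T];\R)$.

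Next I would pin down this limit using Corollary \ref{findim}. Since the evaluation maps $\pi_{s_1,\ldots,s_k}\colon C([0,T];\R)\to\R^k$, $f\mapsto(f(s_1),\ldots,f(s_k))$, are continuous, weak convergence along the chosen subsequence forces the finite-dimensional distributions of $\Xi$ to be the weak limits of those of $S^{(n)}$. By Corollary \ref{findim} these limits are exactly the finite-dimensional distributions of fractional Brownian motion $B^H$ with Hurst parameter $H=\tfrac12+\alpha$. Because the finite-dimensional distributions determine the law of a continuous process (the cylinder sets generate the Borel $\sigma$-algebra on $C([0,T];\R)$), it follows that $\Xi\stackrel{d}{=}(B^H_t)_{0\le t\le T}$. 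As every convergent subsequence has this same limit, the subsequence criterion gives that $(S^{(n)}(t))_{0\le t\le T}$ itself converges in distribution to $(B^H_t)_{0\le t\le T}$ in $C([0,T];\R)$.

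Finally I would lift this to the locally uniform topology on $C([0,\infty);\R)$. This space is Polish, metrised e.g.\ by $d(f,g)=\sum_{k\ge 1}2^{-k}\bigl(1\wedge\|f-g\|_{C([0,k];\R)}\bigr)$, and the restriction maps $C([0,\infty);\R)\to C([0,T];\R)$ are continuous; consequently weak convergence on $C([0,\infty);\R)$ is equivalent to weak convergence of all restrictions to $C([0,k];\R)$, $k\in\N$, which we have just established. Since fractional Brownian motion has a.s.\ continuous paths and hence defines a genuine $C([0,\infty);\R)$-valued random variable, this completes the argument. The proof is a routine assembly of standard facts; the only points requiring (minor) care are the unique identification of the subsequential limits via finite-dimensional distributions and the passage from compact time intervals to $[0,\infty)$.
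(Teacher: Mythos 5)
Your proposal is correct and follows essentially the same route as the paper, which simply declares Corollary \ref{funcconv} a direct consequence of Corollary \ref{findim} (convergence of finite-dimensional distributions) and Proposition \ref{proptight} (tightness), the convergence in $C([0,T])$ being delivered by the Billingsley criterion (Lemma \ref{satz:billingsley}) invoked in the proof of Proposition \ref{proptight}. You merely spell out the standard Prokhorov subsequence argument and the passage from compact intervals to the locally uniform topology, which the paper leaves implicit.
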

\section{Coalescence probabilities in the Hammond-Sheffield urn}\label{secHScoal}
In this section we will assume that the weights $q_n$ of the renewal measure defined in \eqref{defqn} obey the asymptotics
\eqref{qas}, see the discussion of this condition in Section \ref{secIntro}.
\begin{proposition}\label{probrel} The coalescence probabilities for the ancestral lineages obey the asymptotics
  \begin{equation}\label{assim}
   \WS(0\sim i)\sim C_\alpha \frac {i^{2\alpha -1}}{L(i)^2} \quad \mbox{ as  } i\to \infty,
     \end{equation}
with $C_\alpha$ as in \eqref{Calpha}.
        \end{proposition}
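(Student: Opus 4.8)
The plan is to reduce the statement to an \emph{exact} identity for the pairwise coalescence probability and then extract the asymptotics from \eqref{qas} by a regular-variation computation. Concretely, I would first prove that
\begin{equation}\label{eq:sketchid}
\WS(0\sim i)=\frac{1}{\sum_{m\ge 0}q_m^2}\sum_{k\ge 0}q_k\,q_{k+i},
\end{equation}
an identity valid whenever $\sum_m q_m^2<\infty$ (which holds under \eqref{qas}, since then $q_m^2$ is regularly varying with index $2\alpha-2<-1$, hence summable). Once \eqref{eq:sketchid} is in hand, the proposition follows by inserting the tail behaviour \eqref{qas} of the renewal weights into the convolution-type sum on the right.

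To establish \eqref{eq:sketchid}, I would run the two ancestral lineages from $0$ and from $i$ off the common field $(R_x)_{x\in\Z}$, always advancing the currently higher particle. Since the particles are strictly decreasing and each advance lands on a fresh site, every revealed increment is an independent copy of $R$, so the gap performs the Markov chain $D_{n+1}=|D_n-R_{n+1}|$ with $D_0=i$, absorbed at $0$; the event $\{0\sim i\}$ is exactly $\{D_n=0\text{ for some }n\}$. The very same chain governs the first meeting of two \emph{independent} ancestral lineages $\mathcal R_0',\mathcal R_i'$ (built from independent fields, started at $0$ and $i$), because up to their first common site the coalescing and the independent lineages have the same law. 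Hence $\WS(0\sim i)=\WS(\mathcal R_0'\cap\mathcal R_i'\neq\emptyset)$. Writing $N:=\#(\mathcal R_0'\cap\mathcal R_i')$, independence gives $\EW[N]=\sum_{k\ge 0}\WS(-k\in\mathcal R_0')\,\WS(-k\in\mathcal R_i')=\sum_{k\ge 0}q_k\,q_{k+i}$, while the strong Markov property at the first common point, from which both renewal processes restart independently, yields $\EW[N\mid N\ge 1]=\sum_{m\ge 0}q_m^2$ irrespective of where that first meeting occurs. Therefore $\EW[N]=\WS(N\ge1)\sum_{m}q_m^2=\WS(0\sim i)\sum_m q_m^2$, which is \eqref{eq:sketchid}.

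For the asymptotics I would feed $q_n\sim\frac{1}{\Gamma(\alpha)\Gamma(1-\alpha)}\,n^{\alpha-1}/L(n)$ into $\sum_{k\ge 0}q_k\,q_{k+i}$. The sum is dominated by indices $k$ of order $i$; substituting $k=xi$ and using the slow variation of $L$ (so that $L(k),L(k+i)\approx L(i)$ on this scale) gives
\begin{equation*}
\sum_{k\ge 0}q_k\,q_{k+i}\sim\frac{1}{\Gamma(\alpha)^2\Gamma(1-\alpha)^2}\,\frac{i^{2\alpha-1}}{L(i)^2}\int_0^\infty x^{\alpha-1}(1+x)^{\alpha-1}\,dx.
\end{equation*}
The integral is the Beta integral $B(\alpha,1-2\alpha)=\Gamma(\alpha)\Gamma(1-2\alpha)/\Gamma(1-\alpha)$ (finite since $0<\alpha<\tfrac12$), so the right-hand side equals $\frac{\Gamma(1-2\alpha)}{\Gamma(\alpha)\Gamma(1-\alpha)^3}\,i^{2\alpha-1}/L(i)^2$. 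Dividing by $\sum_m q_m^2$ and comparing with \eqref{Calpha} yields $\WS(0\sim i)\sim C_\alpha\,i^{2\alpha-1}/L(i)^2$, as claimed.

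The conceptual core is the identity \eqref{eq:sketchid}; the delicate part is the last step. Turning the heuristic ``$k\asymp i$'' evaluation of $\sum_k q_k q_{k+i}$ into a rigorous $\sim$ requires splitting the sum over the ranges $k\le\varepsilon i$, $\varepsilon i<k<Mi$ and $k\ge Mi$, and controlling the first and last by Potter bounds together with the uniform convergence theorem for regularly varying sequences, so that the contributions outside the bulk vanish relative to $i^{2\alpha-1}/L(i)^2$ as $\varepsilon\to0$ and $M\to\infty$. I would also take care to justify the coupling ``same law up to the first common site'' cleanly (the leapfrog exploration makes this transparent) and to record $\sum_m q_m^2\in(0,\infty)$ so that \eqref{eq:sketchid} is well posed. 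I expect this regular-variation bookkeeping, rather than any single conceptual step, to be the main technical burden.
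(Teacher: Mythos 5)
Your proposal is correct and takes essentially the same route as the paper: your exact identity for $\WS(0\sim i)$ is precisely the paper's \eqref{connection}, obtained there too by decomposing $\EW\left[\left|A_0\cap A_i\right|\right]$ at the first common point of decoupled (independent) lineages, and your three-range regular-variation evaluation of $\sum_{k\ge 0}q_kq_{k+i}$ ending in the Beta integral $B(\alpha,1-2\alpha)$ is exactly the content of Proposition \ref{PropLemma3c} and Lemma \ref{Karamatalemma}. The only differences are cosmetic: you make the coupling behind $\WS(0\sim i)=\WS\left(A_0\cap A_i\neq\emptyset\right)$ explicit via the leapfrog gap chain (the paper cites \cite{HS} for this), and you invoke Potter bounds plus the uniform convergence theorem directly where the paper first proves the pure Karamata case and then transfers via the representation theorem.
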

 \begin{remark}\label{consequence}
   \begin{enumerate}[label=(\alph*)]
   \item The asymptotics \eqref{asvar} is a direct consequence of \eqref{assim}. Indeed, the latter implies 
     \begin{equation} \label{newoneseven}
     \sum_{i\in [n]} (n-i)\WS(i\sim 0) \sim  n^{2\alpha +1} \frac{C_\alpha}{L(n)^2}\frac 1n \sum_{i=1}^n \left(1-\frac in\right)\left( \frac in \right)^{2\alpha-1} \quad \mbox{as } n\to \infty, 
     \end{equation}
with the limit of the Riemann sums being 
$$\int_0^1 (1-x) x^{2\alpha -1} \mathrm{d}x = \frac 1{2\alpha (2\alpha+1)}.$$
Since the left-hand sides of  \eqref{newoneseven} and \eqref{asvar} are equal, this shows the asserted implication.\\
\item\label{remark:consequenceb}  In the light of the proof of Proposition \ref{probrel} (carried out in Section~\ref{Proofprobrel}) we conjecture that increment distributions $\mu$ that satisfy \eqref{basic} {\em and} violate \eqref{qas}, generically also do not admit the asymptotics~\eqref{assim}. \end{enumerate}
   \end{remark}
   The next result, Proposition \ref{3and4}, will be instrumental in the proof of Theorem \ref{mainth} under Assumption \ref{theoremmain:B}.
This proposition will consider  the probability that three (respectively four) individuals that are randomly chosen from $[n]$ belong to the same component of $\mathcal G_\mu$.
      \begin{proposition}\label{3and4} Let
   $\mathscr I^{(n)} , \mathscr J^{(n)}, \mathscr K^{(n)}$ and \mbox{$\mathscr L^{(n)}$}
    be independent and  uniformly distributed on~$[n]$, and independent of the random graph $\mathcal G_\mu$. Then for all $\delta>0$, as $n \to \infty$, 
 \begin{equation}
 \label{trip}\phantom{AAA}\WS\left(\mathscr I^{(n)} \sim \mathscr J^{(n)} \sim  \mathscr K^{(n)}\right)  = O\left(n^{4\alpha-2+\delta}\right), 
 \end{equation}
 \begin{equation}
\label{quadr}\WS\left(\mathscr I^{(n)} \sim \mathscr J^{(n)} \sim  \mathscr K^{(n)}  \sim  \mathscr L^{(n)}\right) = O\left(n^{6\alpha-3+\delta}\right).
\end{equation}
\end{proposition}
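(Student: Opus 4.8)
The plan is to pass from the random sampling to deterministic sums via $\WS(\mathscr I^{(n)}\sim\mathscr J^{(n)}\sim\mathscr K^{(n)})=n^{-3}\sum_{x,y,z\in[n]}\WS(x\sim y\sim z)$ and the analogous identity with $n^{-4}$ for four points, and to establish
$\sum_{x,y,z\in[n]}\WS(x\sim y\sim z)=O(n^{4\alpha+1+\delta})$ together with the four-point analogue $\sum_{x,y,z,w\in[n]}\WS(x\sim y\sim z\sim w)=O(n^{6\alpha+1+\delta})$. Dividing by $n^{3}$ resp.\ $n^{4}$ gives the first $O(\cdot)$ in \eqref{trip} resp.\ \eqref{quadr}; the second equality in each line is then purely arithmetic, since by \eqref{asvar} one has $\WS(\mathscr I^{(n)}\sim\mathscr J^{(n)})\asymp n^{2\alpha-1}$ and $(2\alpha-1)\big(2-\tfrac{\delta}{1-2\alpha}\big)=4\alpha-2+\delta$, $(2\alpha-1)\big(3-\tfrac{\delta}{1-2\alpha}\big)=6\alpha-3+\delta$ (the ensuing slowly varying factors being reabsorbed into $\delta$). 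As preparation I would record, from Proposition \ref{probrel}, \eqref{qas} and the slow variation of $L$, the crude uniform bounds $q_m\le C_\epsilon m^{\alpha-1+\epsilon}$ and $\WS(i\sim j)\le C_\epsilon|i-j|^{2\alpha-1+\epsilon}$, valid for all $m\ge1$, $i\ne j$ and every $\epsilon>0$; the parameters $\epsilon$ absorb $L$ and are ultimately bundled into $\delta$.

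The core is a genealogical decomposition. Two ancestral lineages are renewal sets driven by the same i.i.d.\ family $(R_i)$, hence coincide from their most recent common ancestor (MRCA) downward; consequently $\{x\sim y\sim z\}$ is realised by two successive coalescences, and I would split according to which pair coalesces first, summing over the location $v$ of that first coalescence. Conditioning on $\sigma(R_w:w>v)$ does two things. First, the probability that a prescribed pair $\{s,t\}$ has MRCA exactly $v$ is at most $q_{s-v}q_{t-v}$: above $v$ the two lineages traverse disjoint sites, so the two ``reach $v$'' events decouple. Second, below $v$ the merged lineage restarts as a fresh renewal process, independent of the conditioning; the remaining lineage, from the point $r$, must meet it, and this is a genuine pair coalescence, bounded by $C_\epsilon|r-v|^{2\alpha-1+\epsilon}$. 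Summing over the three choices of first-coalescing pair $\{s,t\}$ with remaining point $r$ this gives
\[
\WS(x\sim y\sim z)\ \le\ C_\epsilon\sum_{\{s,t\},\,r}\ \sum_{v\le\min(s,t)}q_{s-v}\,q_{t-v}\,|r-v|^{2\alpha-1+\epsilon}.
\]

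Inserting the uniform bounds reduces everything to convergent integrals. For the bulk of triples, whose gaps are of order $n$, the substitution $v=\min(s,t)-un$ turns the inner sum into $n^{4\alpha-2+\epsilon'}$ times an integral of the shape
\[
\int_0^\infty u^{\alpha-1}(u+b)^{\alpha-1}\,|u-a|^{2\alpha-1+\epsilon'}\,\mathrm{d}u,
\]
which converges precisely because $0<\alpha<\tfrac12$: integrability at $0$ uses $\alpha>0$, at the singularity $u=a$ uses $2\alpha>0$, and at $\infty$ (where the integrand decays like $u^{4\alpha-3}$) uses $\alpha<\tfrac12$. Summing over the $O(n^{3})$ triples yields $O(n^{4\alpha+1+\delta})$. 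Degenerate configurations, where two or three points lie within $o(n)$ of each other, are handled by the same bounds with the slowly varying factors and the near-diagonal singularities charged to the free exponent $\delta$; this is precisely where the slack $\delta>0$ is spent. The four-point bound \eqref{quadr} is obtained in the same spirit by running through the finitely many topologies of the coalescent tree of four lineages, applying the MRCA bound $q_{\cdot}q_{\cdot}$ at each coalescence and the fresh-renewal pair bound for each joining lineage; the resulting multi-dimensional integrals again converge for $\alpha<\tfrac12$ and scale with exponent $6\alpha+1$.

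I expect the genealogical decomposition to be the main obstacle. One must set it up so that the inequality points in the upper-bound direction: the naive estimate obtained by demanding only a \emph{common} point, $\WS(x\sim y\sim z)\le\sum_v q_{x-v}q_{y-v}q_{z-v}$, is \emph{not} valid, because the three lineages are positively correlated and the true probability exceeds the sum of products — which in any case scales like the too-small $n^{3\alpha-2}$ rather than $n^{4\alpha-2}$. Thus the successive-coalescence structure, rather than a single common point, is indispensable. The second delicate point is to make the strong Markov/freshness statement at the random level $v$ precise and uniform in the configuration, in particular to absorb the overshoot cases in which a joining lineage enters strictly below $v$; controlling these uniformly, together with the slowly varying $L$, is exactly what forces the arbitrarily small loss $\delta$ in the exponents.
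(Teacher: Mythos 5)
Your proposal takes essentially the same route as the paper: decompose $\{x\sim y\sim z\}$ by which pair coalesces first, bound the first merger at $v$ by $q_{s-v}q_{t-v}$ using decoupled renewal lineages, bound the subsequent join by a pair-coalescence probability $\asymp|r-v|^{2\alpha-1}$, evaluate the resulting convolution sums by Beta-type integrals (convergent precisely for $0<\alpha<\tfrac12$), sum over $O(n^3)$ resp.\ $O(n^4)$ configurations, and convert via \eqref{asvar} — and your observation that the naive single-common-point bound $\sum_v q\,q\,q$ is invalid matches the paper's structure exactly. The one implementation difference is that the paper avoids the strong-Markov/overshoot delicacy you flag at the random level $v$: instead of conditioning on an exact MRCA and restarting, it bounds the probability by the \emph{expected number} of intersection points of fully independent restart families (the sets $B_{k,\ell}$, unions of decoupled lineages launched from \emph{all} intersection points of $A_k$ and $A_\ell$), which reduces everything to unconditional first-moment computations with the weights $q_m$ and renders your delicate freshness argument unnecessary.
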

Proposition \ref{3and4} will be proved in Section \ref{Proofthreefour}. The following corollary will bound the triplet and quartet coalescence probabilities addressed in \eqref{trip} and \eqref{quadr} asymptotically as $n\to \infty$ by powers of the pair coalescence probability. Estimates of this kind will be required in Theorem  \ref{defmultiS}; note that for sufficiently small $\varepsilon$ the powers guaranteed by Corollary \ref{cor3and4} are strictly larger than those required in Theorem  \ref{defmultiS}.

      \begin{corollary}\label{cor3and4} Let
   $\mathscr I^{(n)} , \mathscr J^{(n)}, \mathscr K^{(n)}$ and \mbox{$\mathscr L^{(n)}$}
    be as in Proposition \eqref{3and4}.Then for all $\varepsilon>0$, as $n \to \infty$, 
 \begin{equation}
\label{trip1}\phantom{AAA}\WS\left(\mathscr I^{(n)} \sim \mathscr J^{(n)} \sim  \mathscr K^{(n)}\right)   =O\left(\left(\WS\left(\mathscr I^{(n)} \sim \mathscr J^{(n)}\right)\right)^{2-\varepsilon}\right), 
 \end{equation}
 \begin{equation}
\label{quadr1}\WS\left(\mathscr I^{(n)} \sim \mathscr J^{(n)} \sim  \mathscr K^{(n)}  \sim  \mathscr L^{(n)}\right) = O\left(\left(\WS\left(\mathscr I^{(n)} \sim \mathscr J^{(n)}\right)\right)^{3-\varepsilon}\right).
\end{equation}
\end{corollary}
The proof of Corollary \ref{cor3and4} is immediate from Proposition \ref{3and4} together with \eqref{asvar}. Indeed, \eqref{asvar} asserts that the order of $\WS\left(\mathscr I^{(n)} \sim \mathscr J^{(n)}\right)$ is $\frac {n^{2\alpha-1}}{L(n)^2}$ as $n\to \infty$. With $\varepsilon$ prescribed as in Corollary~\ref{cor3and4}, it thus suffices to choose in Proposition \ref{3and4} a positive $\delta$ that is smaller than $\varepsilon\left(1-2\alpha\right)$.

 Although the next result, Proposition \ref{propMRCA}, will not be used explicitly in the proof of Theorem \ref{mainth}, it seems interesting in its own right and also gives an intuition why the  estimates  in Corollary~\ref{cor3and4} should hold. Qualitatively,  Proposition \ref {propMRCA} says that for large $n$ the ancestral lineages of $0$ and $n$ with high probability either coalesce quickly (i.e. \emph{on the scale~$n$}) or never. This makes it believable that, as asserted in Corollary~\ref{cor3and4},  the triplet coalescence probability  should asymptotically be comparable to the square of the  pair coalescence probability, and that the quartet coalescence probability should roughly be equal to the third power of the pair coalescence probability. 
    \begin{proposition}\label{propMRCA}
    Let $\mathscr M(0,n) := \max\left\{j \le 0: j \sim 0 \mbox{ and }  j \sim n\right\}$ (with $\max \emptyset := -\infty$) be the most recent common ancestor of\,  $0$ and $n$, and put $D_n := -\mathscr M(0,n)$. Then,  as $n \to \infty$,  the sequence of random variables $\frac {D_n}n$, conditioned under $\{0 \sim n\}$, converges in distribution to the random variable $D$ with density $B(\alpha, 1-2\alpha)^{-1} x^{\alpha-1}(1+x)^{\alpha-1}\, dx$, $x > 0$.   
  \end{proposition}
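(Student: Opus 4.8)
The plan is to realise $D_n$ as the \emph{first coincidence of two independent renewal processes}, to translate the law of $D_n$ into a renewal equation, and to read off the limiting density from the regularly varying asymptotics \eqref{qas}.

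\emph{Step 1 (reduction to independent renewals).} Tracing the ancestral lineages of $0$ and of $n$ downwards, the two lineages use disjoint increments $R_i$ until they first meet, after which they coincide; hence, up to their first meeting, the pair of lineages has the same law as a pair of \emph{independent} renewal processes. Let $V$ and $W$ be independent renewal processes on $\N_0$ with increment law $\mu$, so that $\WS(k\in V)=\WS(k\in W)=q_k$. Then $-k$ (with $k\ge 0$) is a common ancestor of $0$ and $n$ exactly when $k\in V$ and $n+k\in W$, and
\[
 D_n \stackrel{d}{=} \min\{k\ge 0:\ k\in V,\ n+k\in W\},\qquad \{0\sim n\}=\{D_n<\infty\}.
\]
Write $\mathcal K:=\{k\ge 0:\ k\in V,\ n+k\in W\}$ for the coincidence set.

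\emph{Step 2 (renewal equation).} By the renewal property of $V$ at $k$ and of $W$ at $n+k$, the set $\mathcal K$ is a (defective, delayed) renewal process whose first point is $D_n$ and whose successive gaps are i.i.d., distributed as the first strictly positive coincidence of two renewal processes started at the same site. The renewal function of the non-delayed version---i.e.\ of the coincidence set $\mathcal K'=V'\cap W'$ of two \emph{independent} renewal processes started at $0$---is $p(j):=\WS(j\in\mathcal K')=q_j^2$, and $\sum_j p(j)=\sum_j q_j^2<\infty$ because $2\alpha-2<-1$. Putting $f(m):=\WS(D_n=m)$ and $h(k):=\WS(k\in\mathcal K)=q_k\,q_{n+k}$, the delayed renewal structure gives the renewal equation
\[
 h=f*p,\qquad\text{i.e.}\qquad q_k\,q_{n+k}=\sum_{m=0}^{k} f(m)\,q_{k-m}^2 .
\]
Inverting (with $\kappa$ the defective gap law, $\sum_j\kappa(j)=1-1/\sum_jq_j^2$) yields
\[
 f=(\delta_0-\kappa)*h,\qquad\text{hence}\qquad 0\le f\le h .
\]
Summing the inverted identity recovers $\WS(0\sim n)=\sum_m f(m)=\big(\sum_m q_m q_{n+m}\big)\big/\sum_j q_j^2$, in agreement with Proposition~\ref{probrel}.

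\emph{Step 3 (asymptotics of $f$ on scale $n$).} I would prove that for every fixed $x>0$,
\[
 f(\lfloor xn\rfloor)\ \sim\ \frac{h(\lfloor xn\rfloor)}{\sum_j q_j^2}\qquad (n\to\infty).
\]
In $\kappa*h(\lfloor xn\rfloor)=\sum_j\kappa(j)\,h(\lfloor xn\rfloor-j)$ the contribution of the indices $j>\varepsilon n$ is $O(n^{4\alpha-3})$ (both $\kappa(j)\le q_j^2$ and $h$ are $\asymp n^{2\alpha-2}$ across the $\asymp n$ relevant sites, and $\sum_jq_j^2<\infty$), which is $o(h(\lfloor xn\rfloor))=o(n^{2\alpha-2})$ since $\alpha<\tfrac12$; while for $j\le\varepsilon n$ regular variation gives $h(\lfloor xn\rfloor-j)=(1+o_\varepsilon(1))\,h(\lfloor xn\rfloor)$ uniformly, with $o_\varepsilon(1)\to0$ as $\varepsilon\to0$. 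Thus $\kappa*h(\lfloor xn\rfloor)=(1-1/\sum_jq_j^2)(1+o(1))\,h(\lfloor xn\rfloor)$ and the claim follows. Inserting $q_\ell\sim(\Gamma(\alpha)\Gamma(1-\alpha))^{-1}\ell^{\alpha-1}/L(\ell)$ from \eqref{qas} gives $n\,h(\lfloor xn\rfloor)\sim (\Gamma(\alpha)\Gamma(1-\alpha))^{-2}\,n^{2\alpha-1}L(n)^{-2}\,x^{\alpha-1}(1+x)^{\alpha-1}$, and a Riemann-sum evaluation gives $\sum_m q_m q_{n+m}\sim(\Gamma(\alpha)\Gamma(1-\alpha))^{-2}n^{2\alpha-1}L(n)^{-2}\,B(\alpha,1-2\alpha)$, the integral $\int_0^\infty u^{\alpha-1}(1+u)^{\alpha-1}\,du$ being $B(\alpha,1-2\alpha)$. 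Dividing, all prefactors (including $\sum_jq_j^2$) cancel and
\[
 \frac{n\,f(\lfloor xn\rfloor)}{\WS(0\sim n)}\ \longrightarrow\ \frac{x^{\alpha-1}(1+x)^{\alpha-1}}{B(\alpha,1-2\alpha)} ,
\]
which is the asserted density.

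\emph{Step 4 (from densities to distributions) and main obstacle.} To upgrade this pointwise convergence of the conditional probability mass functions to convergence in distribution of $D_n/n$ given $\{0\sim n\}$, I would invoke the uniform domination $f\le h$: since $x^{\alpha-1}(1+x)^{\alpha-1}$ is integrable on $(0,\infty)$ for $0<\alpha<\tfrac12$ (at $0$ because $\alpha>0$, at $\infty$ because $2\alpha-2<-1$), the ratios $\sum_{m\le\varepsilon n}h(m)/\WS(0\sim n)$ and $\sum_{m\ge Mn}h(m)/\WS(0\sim n)$ are uniformly small for $\varepsilon$ small and $M$ large, which excludes any escape of mass towards $0$ or $\infty$; together with the pointwise limit this yields the stated convergence. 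I expect the real work to lie in Step~3, namely in making the estimate $f\sim h/\sum_j q_j^2$ uniform for $x$ in compact subsets of $(0,\infty)$---that is, in controlling the heavy-tailed convolution $\kappa*h$ (the gap law $\kappa$ has tail index $1-2\alpha\in(0,1)$) and in justifying the interchange of the limits $\varepsilon\to0$ and $n\to\infty$ while carrying the slowly varying $L$ through the regular-variation estimates. The regeneration argument behind $h=f*p$ and the Riemann-sum asymptotics are then routine.
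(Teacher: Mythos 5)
Your strategy is sound and, once the technical work you flag in Step~3 is carried out, it proves the proposition; but it takes a genuinely different (and heavier) route than the paper. Both proofs share the same backbone: decoupling the two lineages into independent renewal sets as in \eqref{defA}, and the regeneration identity behind your $h=f*p$ --- in the paper's notation, $\sum_{k=0}^{ri} q_k q_{k+i} = \sum_{k=0}^{ri} f_i(k)\sum_{l=0}^{ri-k} q_l^2$, obtained by decomposing $\EW\left[\left|A_0\cap A_i \cap \{0,\ldots,-ri\}\right|\right]$ at the first coincidence. The paper, however, never inverts this equation: it bounds the inner sum from above by $\sum_{l\le ri}q_l^2$, giving $F_i(ri)\ge \sum_{k\le ri}q_kq_{k+i}\big/\sum_{l\le ri}q_l^2$, and, after widening the window to $(r+\varepsilon)i$, from below by $\sum_{l\le \varepsilon i}q_l^2$, giving the matching upper bound on $F_i(ri)$; it then feeds in the truncated version of Proposition~\ref{PropLemma3c}, namely $\sum_{k\le ri} q_kq_{k+i} \sim \Gamma(\alpha)^{-2}\Gamma(1-\alpha)^{-2}\, i^{2\alpha-1}L(i)^{-2}\int_0^r x^{\alpha-1}(1+x)^{\alpha-1}\,\mathrm{d}x$, together with \eqref{assim}, and lets $\varepsilon\to 0$. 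This sandwiches the conditional distribution function directly and requires no local estimates and no renewal-equation inversion at all. You instead invert ($f=(\delta_0-\kappa)*h$, whence $0\le f\le h$) and prove a local limit theorem $f(\lfloor xn\rfloor)\sim h(\lfloor xn\rfloor)\big/\sum_j q_j^2$, then integrate using the domination $f\le h$ to rule out escape of mass. Your route proves strictly more (a local limit theorem for the depth of the MRCA, which the paper does not state), at the price of exactly the uniform regular-variation control you identify as the main obstacle; the paper's route buys brevity, needing only the integrated asymptotics plus monotonicity of the partial sums $\sum_{l\le \cdot}q_l^2$.

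Two repairs are worth recording. First, in Step~3 the parenthetical justification of the tail estimate is off: for $j$ near $\lfloor xn\rfloor$ the factor $h(\lfloor xn\rfloor-j)=q_{\lfloor xn\rfloor-j}\,q_{n+\lfloor xn\rfloor-j}$ can be of order $n^{\alpha-1}$, not $n^{2\alpha-2}$, so $h$ is not comparable to $n^{2\alpha-2}$ across all relevant sites. The estimate itself survives: for $j>\varepsilon n$ use $\kappa(j)\le q_j^2=O_\varepsilon\left(n^{2\alpha-2+\delta}\right)$ uniformly (via \eqref{qas}) together with $\sum_m h(m)=O\left(n^{2\alpha-1+\delta}\right)$, so the tail of $\kappa*h$ is $O\left(n^{4\alpha-3+2\delta}\right)=o\left(h(\lfloor xn\rfloor)\right)$ because $\alpha<\tfrac12$. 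Second, the Riemann-sum asymptotics for $\sum_m q_mq_{n+m}$ and its truncations, which you call routine, is precisely Proposition~\ref{PropLemma3c}; its proof in the paper is not quite routine (it needs the Karamata representation, ultimate monotonicity of $r_n$, and a separate treatment of small and large summation indices), but you may of course cite it, and \eqref{connection} already contains your Step-2 consistency check $\WS(0\sim n)=\sum_m q_mq_{n+m}\big/\sum_j q_j^2$.
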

  Proposition \ref{propMRCA} will be proved in Section~\ref{ProofMRCA}. The distribution of the random variable $D$ appearing in Proposition \ref{propMRCA} is known as Beta prime distribution with parameters $\alpha$ and $1- 2\alpha$; it arises as the distribution of $B/(1-B)$ where $B$ is Beta($\alpha, 1- 2\alpha$) distributed. 
  See Figure~\ref{fig:mrca} for simulations of the ancestral lineages, which also illustrate the depths of the most recent common ancestors. 
  \begin{figure}
  \begin{subfigure}{0.33\textwidth}\centering
    \includegraphics[height=3.5\textwidth]{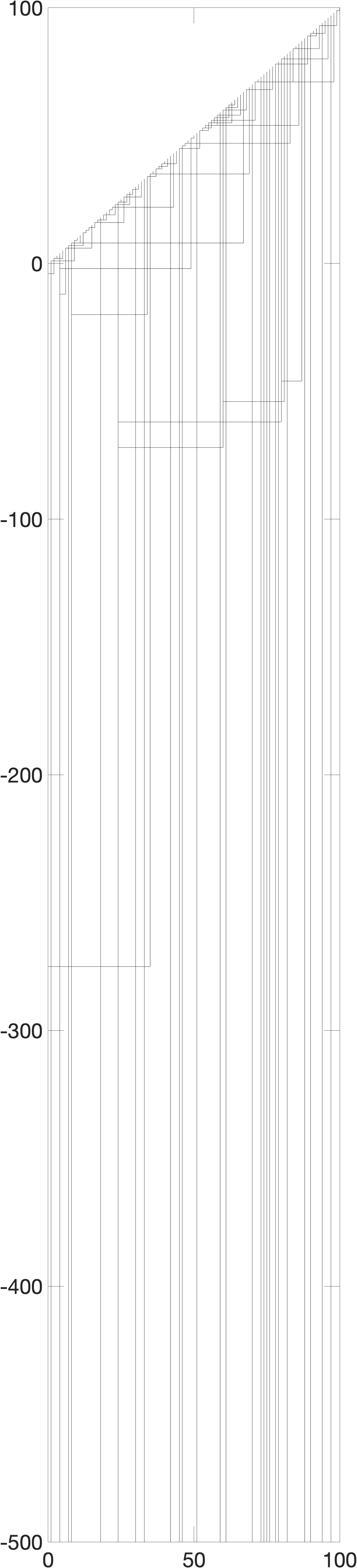}
    \caption{}\label{subfigure:1}
  \end{subfigure}
  \begin{subfigure}{0.33\textwidth}\centering
    \includegraphics[height=3.5\textwidth]{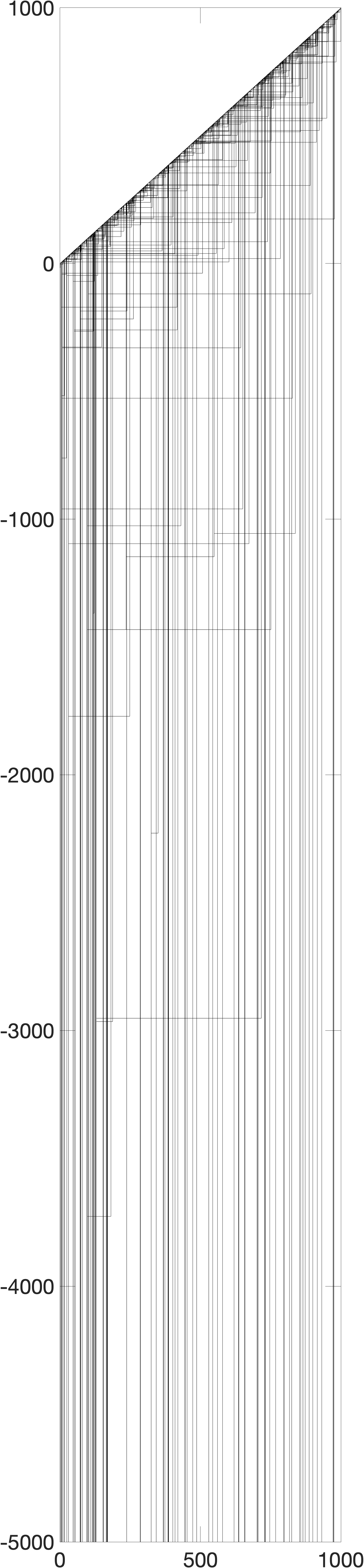}
    \caption{}
    \label{subfigure:2}
  \end{subfigure}
  \caption{This is a simulation of the ancestral lineages of the individuals $0,\ldots, n$,
  with $n = 100$ in panel~(\subref{subfigure:1}) and $n = 1000$ in panel~(\subref{subfigure:2}). Direction of time is vertical, and horizontal lines mark coalescence events. The two panels give an impression of how the genealogical forest of the individuals $i \in [n]$ scales with $n$, see e.g. Proposition~\ref{propMRCA}. Like in Figure \ref{fig:randomgraph}, the parameter $\alpha$ was chosen as 0.39.}
  \label{fig:mrca}
\end{figure}

The following lemma will also be important in the proof of Theorem~\ref{mainth} under Assumption \ref{theoremmain:B}.
\begin{lemma} \label{Covest} Let $\sim$ be the random equivalence relation defined in \eqref{simdef}. For $i,j,k,\ell \in \mathbb Z$, 
    \begin{equation} \label{Covineq}
    \Cov\left[I_{\{i\sim j\}}, I_{\{k\sim \ell\}}\right] \le \WS\left(i\sim j \sim k \sim \ell\right).
    \end{equation}
      \end{lemma}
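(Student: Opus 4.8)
The plan is to start from the elementary identity
\[
\Cov\left[I_{\{i\sim j\}}, I_{\{k\sim \ell\}}\right] = \WS\left(i\sim j,\ k\sim \ell\right) - \WS(i\sim j)\,\WS(k\sim \ell),
\]
writing $A := \{i\sim j\}$ and $B := \{k \sim \ell\}$ for brevity. On the event $A\cap B$ the two pairs are each internally related, so either all four points lie in a single component of $\mathcal G_\mu$, or $\{i,j\}$ and $\{k,\ell\}$ sit in two distinct components. Splitting $A\cap B$ accordingly gives $\WS(A\cap B) = \WS(i\sim j\sim k\sim \ell) + \WS(F)$, where $F := A\cap B\cap\{i\not\sim k\}$ is the event that both pairs coalesce internally but into different components. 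Since the first summand is exactly the right-hand side of \eqref{Covineq}, the whole lemma reduces to the claim $\WS(F) \le \WS(A)\,\WS(B)$.

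To prove this I would exploit the genealogical structure: recall that $u\sim v$ holds precisely when the ancestral lineages $\mathcal A_u = \{u, u-R_u,\ldots\}$ and $\mathcal A_v$ (the downward renewal sequences generated by the i.i.d.\ family $(R_w)_{w\in\Z}$) intersect. The event $A$ is therefore \emph{certified} by the coordinates $\{R_w : w\in \mathcal A_i\cup \mathcal A_j\}$ and $B$ by $\{R_w : w \in \mathcal A_k\cup \mathcal A_\ell\}$. On $F$ we have $i\not\sim k$, so $\mathscr C_i\cap \mathscr C_k = \emptyset$; since the lineage of a vertex is contained in that vertex's component, the two certifying vertex sets $\mathcal A_i\cup\mathcal A_j$ and $\mathcal A_k\cup\mathcal A_\ell$ are disjoint. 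Hence on $F$ the events $A$ and $B$ occur \emph{disjointly}, i.e.\ are witnessed by disjoint blocks of the independent randomness $(R_w)_{w\in\Z}$; in the usual notation $F\subseteq A\circ B$.

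The bound $\WS(A\circ B)\le \WS(A)\WS(B)$ is then exactly of van den Berg--Kesten type, and I would conclude by the BK/Reimer inequality for product measures. The main obstacle is that the connectivity events $A,B$ are neither local nor monotone in $(R_w)$: the lineages extend to $-\infty$ and the index set is the infinite product $\N^{\Z}$. To make the disjoint-occurrence argument rigorous I would first truncate $\mu$ to finite support and intersect with the event that all relevant coalescences occur inside a finite window $[-N,N]$ using boundedly many renewal steps, so that $A,B$ become cylinder events on a finite product of finite alphabets, where Reimer's inequality applies and yields $\WS(F_N)\le \WS(A)\WS(B)$; letting the truncation parameters tend to infinity recovers the claim by monotone convergence. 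Alternatively, one can bypass the general inequality and give a direct coupling proof tailored to the renewal setting: condition on the (vertex-disjoint) lineages of the two pairs and resample the environment off each block, which is precisely the step where the i.i.d.\ structure of $(R_w)$ turns ``disjoint witnesses'' into the product bound. Degenerate index configurations (repetitions among $i,j,k,\ell$ forcing $i\sim k$ on $A\cap B$) make $F$ empty and the inequality trivial, so only the generic case needs the above.
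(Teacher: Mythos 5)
Your reduction coincides exactly with the paper's: both proofs dispose of the degenerate index configurations first, then split $\WS(A\cap B)$ into $\WS(i\sim j\sim k\sim\ell)+\WS(F)$ with $F=A\cap B\cap\{i\nsim k\}$, so that the lemma reduces to the product bound $\WS(F)\le\WS(A)\WS(B)$. The two arguments part ways only in how that bound is proved. You read $F$ as a disjoint-occurrence event and invoke the BK/Reimer inequality (correctly insisting on Reimer, since the events are not monotone), paying for it with a truncation scheme; the paper instead couples the true coalescing genealogy to the independent, non-coalescing renewal lineages $A_g$ of \eqref{defA} via a lookdown construction, observes that on $F$ the independent certificates $\{A_i\cap A_j\neq\emptyset\}$ and $\{A_k\cap A_\ell\neq\emptyset\}$ both occur, and concludes by independence of the $A_g$ --- elementary, self-contained, valid directly in the infinite setting, and reusing machinery that also serves the triplet and quartet estimates. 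Two remarks on your route. First, ``truncate $\mu$ to finite support'' as stated changes the underlying measure, so the final limit is not pure monotone convergence and needs an extra continuity step; it is cleaner to keep $\mu$ fixed, note that the window events $A_N,B_N$ (coalescence above level $-N$) depend on finitely many coordinates and on each $R_w$ only through a finite quotient alphabet (lump together all values that exit the window), apply Reimer there, and use $F\subseteq\bigcup_N\left(A_N\circ B_N\right)$. Second, Reimer is actually overkill here: every configuration in $A$ carries a \emph{unique} minimal witness, namely the two lineage paths down to the MRCA, so decomposing $F$ over pairs of vertex-disjoint path configurations and using independence of disjoint coordinate blocks gives the product bound directly --- which is in essence what your sketched ``direct coupling'' alternative, and the paper's lookdown argument, both do.
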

       \noindent  Here and below, $I_E$ denotes the indicator variable of an event $E$. 
\begin{remark}
The proof of Lemma \ref{Covest} (given in Section \ref{ProofCovest}) shows that \eqref{Covineq} holds for general increment distributions $\mu$, without the assumptions \eqref{basic} and \eqref{nonlattice}.
\end{remark} 
\section{Asymptotic Gaussianity in randomly coloured random partitions}\label{secStein}
In this section we consider a situation that is more general than the one described in Section~\ref{secIntro}. For $m \in \mathbb N$ let $\mathscr P^{(m)}$ be a random partition of $[m]$. The (random) equivalence relation on $[m]$ induced by $\mathscr P^{(m)}$ will be denoted  by  $\stackrel m\sim $, i.e. 
\begin{equation}\label{simmdef}
  i \stackrel m \sim j :\Longleftrightarrow i \mbox{ and }j \mbox { belong to the same partition element of } \mathscr P^{(m)}.
\end{equation}
The situation described in Section~\ref{secIntro} fits into this framework, by choosing $\stackrel m \sim$ as the restriction to the set $[m]$ of the equivalence relation $\sim$ defined in \eqref{simdef}. Note, however, that this kind of consistency of the relations  $\stackrel m \sim$ is not required in the present section.

Let $Y$ be a real valued random variable  with $\EW[Y]~=~0$ and $0<\EW[Y^4] <\infty$. Thinking of each partition element being ``coloured''  by an independent copy of~$Y$,
we write $Y_i^{(m)}$ for the colour of the partition element in $\mathscr P^{(m)}$ to which $i \in [m]$ belongs.   We then define for $k \in \N$
$$Z_k^{(m)}:= \sum_{i=1}^kY^{(m)}_i\, .$$ 
In the sequel we fix a natural number  $d$ and real numbers $0=\rho_0 < \rho_1 < \cdots < \rho_{d}=1$.
The following theorem presents a sufficient criterion for the asymptotic normality of the sequence of $\mathbb R^{d}$-valued random variables 
\begin{equation}\label{defmultiS}
\mathcal Z^{(m)} := \left(Z^{(m)}_{\lfloor \rho_1^{}m\rfloor}, \ldots, Z^{(m)}_{\lfloor \rho_{d}^{}m\rfloor}\right)
\end{equation} 
as $m\to \infty$.
To prepare for this, let for all $m \in \N$ the random variables
   $\mathscr I^{(m)} , \mathscr J^{(m)}, \mathscr K^{(m)}$ and \mbox{$\mathscr L^{(m)}$}
    be independent and  uniformly distributed on~$[m]$, and independent of $\mathscr P^{(m)}$ and of~$\left(Y^{(m)}_i\right)_{i\in [m]}$.
     \begin{theorem}\label{asGausscolored}
The sequence of $\R^{d}$-valued random variables $\mathcal Z^{(m)}$ defined in \eqref{defmultiS} is asymptotically Gaussian as $m\to \infty$ provided the following  conditions are satisfied:
 \begin{eqnarray} \label{drei}
 \WS\left(\mathscr I^{(m)}  \stackrel m\sim \mathscr J^{(m)}  \stackrel m\sim  \mathscr K^{(m)}\right) \qquad &=& o\left(\left(\WS\left(\mathscr I^{(m)}  \stackrel m \sim \mathscr J^{(m)} \right)\right)^{3/2}\right)  \mbox{ as } m\to \infty, \\ \label{vier}
\WS\left(\mathscr I^{(m)}  \stackrel m\sim \mathscr J^{(m)}  \stackrel m\sim  \mathscr K^{(m)}  \stackrel m\sim \mathscr L^{(m)}\right) &=& o\left(\left(\WS\left(\mathscr I^{(m)}  \stackrel m \sim \mathscr J^{(m)} \right)\right)^2\right) \quad\mbox{ as } m\to \infty,
\end{eqnarray}
\begin{equation} \label{Covineqn}
    \Cov\left[I_{\{i\stackrel m\sim j\}}, I_{\{k\stackrel m\sim \ell\}}\right] \le \WS\left(i\stackrel m \sim j \stackrel m\sim k \stackrel m\sim \ell\right) \quad \mbox{\rm  for all } m \in \mathbb N \mbox{ \rm  and } i,j,k,\ell \in [m],
    \end{equation}
    and for all  $\left(\alpha_1, \ldots, \alpha_{d}\right) \in \R^{d}\setminus \{(0,\ldots, 0)\}$ and
    $$a_i^{(m)} := \alpha_g \mbox{ \rm  if } \lfloor \rho_{g-1}^{} m\rfloor  < i \le \lfloor \rho_{g} m\rfloor,    \quad   i=1,\ldots m;   \quad g=1,\ldots, d$$ there exists a constant $\widetilde C>0$ (not depending on $m$) such that 
    \begin{equation} \label{variancecomp}
\sum_{i,j=1}^m a_i^{(m)} a_j^{(m)} \WS\left(i \stackrel m \sim j\right)\ge \widetilde C \sum_{i,j=1}^m \WS\left(i \stackrel m \sim j\right), \quad m\in\N.
\end{equation}
\end{theorem}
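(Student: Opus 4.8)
The plan is to combine the Cramér–Wold device with a one-dimensional quantitative central limit theorem of Stein type (the estimate recorded in Proposition~\ref{propYnew}), arranging matters so that the four hypotheses \eqref{drei}--\eqref{variancecomp} are exactly what makes the resulting error terms vanish. First I would reduce the $\R^d$-valued statement to a scalar one: by Cramér–Wold, $\mathcal Z^{(m)}$ is asymptotically Gaussian as soon as for every $\lambda\in\R^d\setminus\{0\}$ the real random variable $\langle\lambda,\mathcal Z^{(m)}\rangle$, normalised by its standard deviation, converges to $N(0,1)$. Writing each $Z^{(m)}_{\lfloor\rho_g m\rfloor}$ as a sum of block-increments and rearranging, this linear combination equals $W^{(m)}:=\sum_{i=1}^m a_i^{(m)}Y_i^{(m)}$, where $a_i^{(m)}=\alpha_h:=\sum_{g\ge h}\lambda_g$ whenever $\lfloor\rho_{h-1}m\rfloor<i\le\lfloor\rho_h m\rfloor$. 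Since $\lambda\mapsto(\alpha_1,\dots,\alpha_d)$ is a linear bijection of $\R^d$, letting $\lambda$ range over $\R^d\setminus\{0\}$ is the same as letting $(\alpha_1,\dots,\alpha_d)$ range over the coefficient vectors appearing in \eqref{variancecomp}. Thus the theorem reduces to showing $W^{(m)}/\sigma_m\Rightarrow N(0,1)$, with $\sigma_m^2:=\Var[W^{(m)}]$.

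Next I would translate everything into coalescence quantities. Because distinct partition elements carry independent centred colours, $\sigma_m^2=\EW[Y^2]\sum_{i,j}a_i^{(m)}a_j^{(m)}\WS(i\stackrel m\sim j)$, so \eqref{variancecomp} together with $\max_i|a_i^{(m)}|\le\max_g|\alpha_g|$ gives $\sigma_m^2\asymp\EW[Y^2]\,m^2p_2^{(m)}$, where $p_k^{(m)}$ abbreviates the $k$-fold sampling probability $\WS(\mathscr I^{(m)}\stackrel m\sim\cdots)$; this is the nondegeneracy that legitimises the normalisation. Conditioning on $\mathscr P^{(m)}$ turns $W^{(m)}$ into a sum $\sum_B c_BY_B$ of independent centred variables with block weights $c_B=\sum_{i\in B}a_i^{(m)}$, and the identities $\sum_B c_B^2=\sum_{i,j}a_i^{(m)}a_j^{(m)}I_{\{i\stackrel m\sim j\}}$ and $\sum_B c_B^4=\sum_{i,j,k,\ell}a_i^{(m)}a_j^{(m)}a_k^{(m)}a_\ell^{(m)}I_{\{i\stackrel m\sim j\stackrel m\sim k\stackrel m\sim\ell\}}$ (with the analogue for $\sum_B|c_B|^3$), together with boundedness of the weights, bound the relevant conditional moments by $m^3p_3^{(m)}$ and $m^4p_4^{(m)}$.

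Then I would feed $W^{(m)}/\sigma_m$ into the Stein estimate. It splits the distance to $N(0,1)$ into a colour contribution --- finite because $\Var[Y^2]<\infty$ by $\EW[Y^4]<\infty$ --- times partition contributions of two kinds. The first is a third-moment (Lyapunov) term of order $\EW[\sum_B|c_B|^3]/\sigma_m^3\lesssim p_3^{(m)}/(p_2^{(m)})^{3/2}$, which vanishes by \eqref{drei}. The second measures the fluctuation of the conditional variance $\sum_B c_B^2$ and, after squaring, is of order $\Var[\sum_B c_B^2]/\sigma_m^4$; here $\Var[Y^2]$ reappears multiplying the nonnegative quartet quantity $\EW[\sum_B c_B^4]\lesssim m^4p_4^{(m)}$, while the genuinely partition-driven part $\Var[\sum_B c_B^2]=\sum_{i,j,k,\ell}a_i^{(m)}a_j^{(m)}a_k^{(m)}a_\ell^{(m)}\Cov[I_{\{i\stackrel m\sim j\}},I_{\{k\stackrel m\sim\ell\}}]$ is bounded by $\lesssim m^4p_4^{(m)}$ upon applying \eqref{Covineqn} (after separating the positive and negative parts of the weights, so as to respect the sign in \eqref{Covineqn}, and combining by Cauchy--Schwarz). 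Thus this term is $\lesssim p_4^{(m)}/(p_2^{(m)})^2\to0$ by \eqref{vier}, whence $W^{(m)}/\sigma_m\Rightarrow N(0,1)$ and Cramér–Wold closes the proof.

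I expect the main obstacle to be the Stein estimate itself (Proposition~\ref{propYnew}) and its weighted adaptation: producing a single clean quantitative bound in which the colouring enters only through the finite constants $\Var[Y^2]$ and $\EW|Y|^3$, while the partition enters only through $p_2^{(m)},p_3^{(m)},p_4^{(m)}$, so that each error term is matched to exactly one hypothesis. A second delicate point is that \eqref{variancecomp} must deliver nondegeneracy uniformly over the direction $(\alpha_1,\dots,\alpha_d)$: without the lower bound it supplies, some linear functional could have variance of smaller order than $m^2p_2^{(m)}$, and then the $o(\cdot)$-estimates \eqref{drei}--\eqref{vier}, all measured relative to $p_2^{(m)}$, would no longer force the error terms to $0$.
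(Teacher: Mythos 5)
Your proposal is correct and follows essentially the same route as the paper: a Cramér--Wold reduction to the weighted sums $\sum_i a_i^{(m)}Y_i^{(m)}$ with piecewise constant weights, an application of the Stein-type bound of Proposition~\ref{propYnew}, a law-of-total-variance decomposition over $\mathscr P^{(m)}$ that matches the third-moment term to \eqref{drei} and the conditional-variance-fluctuation term to \eqref{vier} together with \eqref{Covineqn}, and the lower bound $\bar\sigma_m^2 \ge \widetilde C\, \Var[Y]\, m^2\, \WS\bigl(\mathscr I^{(m)}\stackrel m\sim \mathscr J^{(m)}\bigr)$ supplied by \eqref{variancecomp}. Your positive/negative-part splitting combined with Cauchy--Schwarz, which reduces the signed-weight covariance sum to nonnegative-weight sums where \eqref{Covineqn} applies termwise, is a sound (in fact slightly more explicit) substitute for the paper's terse reduction \eqref{Ysquare} of the weighted variance to the unweighted one.
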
  
\begin{remark}\label{remark:th31}
  \begin{enumerate}[label=(\alph*)]
  \item \label{remark:th31a} Asymptotic Gaussianity of the univariate sequence $(Z_m^{(m)})_{m\in \N}$ is implied by the first three of the four conditions in Theorem \ref{asGausscolored}. Indeed, for $d=1$, condition \eqref{variancecomp} is automatically satisfied with $\widetilde C := \alpha_1$, since in that case $a_i^{(m)} =  \alpha_1$ for $i=1,\ldots, m$.\\
    \item \label{remark:th31b} If the  partitions $\mathscr P^{(m)}$ are induced by a Hammond-Sheffield urn as described in Section \ref{secIntro}, then {\em all} conditions of Theorem \ref{asGausscolored} (including  condition \eqref{variancecomp} for {\em all} $d \in \N$) are implied by the assumption~\eqref{qas}, see Section \ref{secmainproof}.
  \end{enumerate}
\end{remark}
Theorem \ref{asGausscolored} will be proved using the following proposition which, in turn, will be deduced from a theorem of Charles Stein, see  \cite[Lecture~X, Theorem 1]{stein}.  Since $m$ will be fixed in this proposition, we will write $\sim $ instead of $\stackrel m \sim$ for notational convenience and without any risk of confusion.
\begin{proposition}\label{propYnew} For fixed $m \in \N$ and $a_1, \ldots , a_m \in \R$ let
\begin{equation}\label{defSn}
  \bar S_m := \sum_{i=1}^m a_iY_i, \quad \bar \sigma_m^2 := \Var[\bar S_m],
\end{equation}
with $Y_i := Y_i^{(m)}$ as defined at the beginning of this section. 
Let $\mathcal N$ be a standard normal random variable. Then
for all continuously differentiable functions \mbox{$h:\R\rightarrow \R$}  with compact support
\begin{eqnarray}\label{newY}
\begin{split}
&\left\vert \EW\left[ h\left( \frac{\bar S_m}{\bar \sigma_m}\right) \right]- \EW [h(\mathcal N)]  \right\vert \\ 
&\leq \frac {c_1(h)}{\bar \sigma_m^2}\sqrt{\Var \left[ \sum\limits_{i,j=1}^m Y_i^2a_ia_j I_{\{i\sim j\}}\right] }
                                                                                                                                         +  \frac{c_2(h)}{\bar \sigma_m^3 }\EW\left[ \sum\limits_{i,j,k=1}^m |Y_i|^3 |a_i|a_ja_kI_{\{i\sim j\sim k\}} \right].
\end{split}
  \end{eqnarray}
  where the finite numbers $c_1(h)$ and $c_2(h)$ are defined as 
$$ c_1(h):= 2 \sup|h-\EW [h(\mathcal N)]|, \quad c_2(h):= 2\sup|h'|.$$
  \end{proposition}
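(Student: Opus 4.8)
The plan is to deduce Proposition~\ref{propYnew} from Stein's characterisation of the normal distribution, applied to the normalised sum $\bar S_m/\bar\sigma_m$. Recall that Stein's method controls $|\EW[h(W)]-\EW[h(\mathcal N)]|$ for $W=\bar S_m/\bar\sigma_m$ by estimating how far $W$ is from satisfying the Stein identity $\EW[f'(W)-Wf(W)]=0$, where $f=f_h$ solves the Stein equation $f'(w)-wf(w)=h(w)-\EW[h(\mathcal N)]$. The cited theorem of Stein (\cite[Lecture~X, Theorem~1]{stein}) packages this for sums of the form $W=\sum_i \xi_i$ of exchangeable-type or dependency-structured summands, and the two error terms on the right-hand side of \eqref{newY} are precisely the two quantities that appear in that abstract bound: a second-moment (variance) term measuring the fluctuation of a conditional-variance proxy, and a third-moment term accounting for the local dependence. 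So the first step is to identify $\xi_i := a_iY_i/\bar\sigma_m$ and match the structure of our sum to the hypotheses of Stein's theorem.

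The dependency structure here is governed entirely by the partition: $Y_i$ and $Y_j$ are independent unless $i\stackrel m\sim j$, in which case $Y_i=Y_j$. Thus $\EW[Y_iY_j]=\EW[Y^2]\,I_{\{i\sim j\}}$ conditionally on the partition, and more generally products of the $Y_i$ factor according to which indices are grouped together. The key algebraic step is to translate the two abstract Stein error terms into the explicit partition-dependent expressions in \eqref{newY}. For the first (variance) term, one computes the conditional variance of $W$ given a suitable $\sigma$-field and recognises that its fluctuation is controlled by $\Var[\sum_{i,j}Y_i^2a_ia_jI_{\{i\sim j\}}]/\bar\sigma_m^4$; here the factor $Y_i^2$ (rather than $Y_iY_j$) appears because on the event $\{i\sim j\}$ one has $Y_iY_j=Y_i^2$. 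For the second (remainder) term, the third-order Taylor/coupling correction in Stein's bound produces $\EW[\sum_{i,j,k}|Y_i|^3|a_i|a_ja_kI_{\{i\sim j\sim k\}}]/\bar\sigma_m^3$, again using that on triple-coincidence events the three $Y$-factors collapse to a single variable, giving $|Y_i|^3$. The constants $c_1(h)=2\sup|h-\EW[h(\mathcal N)]|$ and $c_2(h)=2\sup|h'|$ are the standard bounds on the Stein solution $f_h$ and its derivative: one uses $\|f_h\|_\infty\le \sqrt{\pi/2}\,\|h-\EW[h(\mathcal N)]\|_\infty$-type estimates and $\|f_h'\|_\infty\le 2\|h'\|_\infty$, which for compactly supported $C^1$ functions $h$ are finite and give exactly these two numbers (possibly after absorbing universal factors into the statement's constants).

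The main obstacle I expect is the careful bookkeeping in matching our concrete summation to the precise normalisation and indexing conventions of Stein's abstract theorem, and in verifying that the conditional-variance term really reduces to the stated $\Var[\cdots]$ without extraneous cross-terms. In particular, one must be attentive to the fact that the relevant conditioning is with respect to the partition structure (or an appropriate exchangeable pair / leave-one-out coupling), and that the ``mean-zero, independent across blocks'' property of the colouring is what kills all the terms except those supported on coincidence events $\{i\sim j\}$ and $\{i\sim j\sim k\}$. Once the dependency graph is correctly described, the collapse $Y_iY_j=Y_i^2$ on $\{i\sim j\}$ and $|Y_i|^2|Y_j|\le |Y_i|^3$ on $\{i\sim j\sim k\}$ (after symmetrising over the roles of $i,j,k$) is routine, but stating it cleanly requires care. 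The remaining steps --- bounding $f_h$ and $f_h'$ and assembling the two error contributions --- are standard once the structural reduction is in place.
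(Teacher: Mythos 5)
Your proposal takes essentially the same route as the paper: both invoke Stein's \cite[Lecture~X, Theorem~1]{stein} for $W=\bar S_m/\bar\sigma_m$ with summands $X_i=a_iY_i/\bar\sigma_m$, identify the square-root term with $\Var\bigl[\sum_{i,j}Y_i^2a_ia_jI_{\{i\sim j\}}\bigr]$ via the exact collapse $Y_iY_j=Y_i^2$ on $\{i\sim j\}$ (no symmetrised inequality is needed), read off the third-moment term from $\EW\left[|G|(W-W^*)^2\right]$, and use $\EW[Y]=0$ together with the independence of colouring and partition to annihilate the middle term of Stein's bound. The ``leave-one-out coupling'' you leave implicit is precisely the one construction the paper makes explicit --- a uniform index $\mathscr I$ independent of everything, $G:=mX_{\mathscr I}$, $W^*:=W-\sum_{j\in M_{\mathscr I}}X_j$ with $M_i$ the block containing $i$, and the verifications $W=\EW[G\,|\,\mathcal B]$ and $\EW\bigl[\EW[G(W-W^*)\,|\,\mathcal B]\bigr]=1$ --- so the two arguments coincide.
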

\begin{proof}
For $i \in [m]$ we put 
\begin{equation}\label{defXi}
X_i := \frac {a_iY_i}{\bar \sigma_m}, \qquad M_i : = \{j\in [m]: j\sim i\},
\end{equation}
i.e., $M_i$ is that element of the partition $\mathscr P^{(m)}$ which contains $i$. With $\mathscr I$ being a uniform pick from~$[m]$ that is independent of $\mathscr P^{(m)}$, 
we write
\[M:= (M_1, \ldots, M_m), \quad W:= \sum_{i=1}^m X_i,  \quad W^\ast := W-\sum_{j\in M_{\mathscr I}}X_j, \quad G:= mX_{\mathscr I},\]
\[\mathcal B := \sigma\left(M, X_1,\ldots, X_m\right), \qquad \mathcal C:= \sigma\left(M,\mathscr I, (X_j)_{j\nsim \mathscr I}\right) \]
and note that $W= \EW[G|\mathcal B]$ a.s. Now \cite[Lecture~X, Theorem 1]{stein} asserts that
\begin{eqnarray}
  &&\left\vert \EW\left[ h\left(W\right) \right]- \EW [h(\mathcal N)]  \right\vert\nonumber\\
 & \leq&  c_1(h)\left(\sqrt{\EW \left[\left(1-\EW\left[G(W-W^\ast)| \mathcal B \right]\right)^2\right] }+c\EW\left[\left\vert\EW\left[G\left\vert\mathcal C\right.\right]\right\vert\right]\right)\label{twosummandsStein}\label{Steinpur} \\&&+
 c_2(h)\EW\left[|G|(W-W^\ast)^2\right], \nonumber
\end{eqnarray}
with $N$, $c_1(h)$ and $c_2(h)$ as in \eqref{newY} and a constant $c>0$. Let us first turn to the term under the square root on the right-hand side of \eqref{Steinpur} and observe that
\begin{eqnarray*}
 \EW \left[ G (W-W^*)| \mathcal B \right]= \EW\left[\left. mX_{\mathscr I} \sum_{j\in M_{\mathscr I}} X_j\, \right\vert\, \mathcal B \right] = \sum_{i=1}^m X_i\sum_{j\in M_i}X_j = \frac 1{\bar \sigma_m^2} \sum_{i=1}^m a_iY_i\sum_{j\in M_i}a_jY_j.
   \end{eqnarray*}
  The expectation of this random variable is 1, since 
  $$
  \EW\left[\EW\left[\frac 1{\bar \sigma_m^2} \sum_{i=1}^ma_iY_i\sum_{j=1}^m a_jY_j \bigg \vert \mathscr P^{(m)} \right]\right] = \frac 1{\bar \sigma_m^2} \Var[\bar S_m] =1.$$
 Hence the term under the square root in \eqref{Steinpur} equals $$\Var \left[\sum_{i=1}^m X_i \sum_{j\in M_i}X_j\right].$$ 
 
 The term $\EW\left[\left\vert\EW\left[G\left\vert\mathcal C\right.\right]\right\vert\right]$  in the right-hand side of  \eqref{Steinpur} vanishes, since the assumed independence of the colouring and the partitions  together with the assumption  $\EW[X_i]=0$ implies
 \begin{equation*}
   \EW\left[\left\vert \EW\left[G\left\vert\mathcal C\right.\right] \right\vert \right] =  \EW\left[\left\vert \EW\left[\left.m X_{\mathscr I} \right\vert M, \mathscr I\right] \right\vert\right] = 0.
 \end{equation*}
 Finally, the rightmost term in  \eqref{Steinpur} equals
 \begin{eqnarray*}
  &&\EW\left[ |G| \left( W-W^* \right)^2 \right]= \EW\left[ |G| \left( \sum_{j \in M_{\mathscr I}}X_j \right)^2  \right]\\
                                              &=& \sum_{i=1}^m \EW\left[I_{\{\mathscr I=i\}} |m X_i| \left( \sum_{j \in M_i}X_j \right)^2 \right]= \sum_{i=1}^m \EW\left[  | X_i| \left( \sum_{j \in M_i}X_j \right)^2 \right].                                      
\end{eqnarray*}
In summary we have shown that the right-hand side of \eqref{Steinpur} equals 
\begin{eqnarray}\label{equivStein}\phantom{AAAA}c_1(h)\sqrt{\Var \left[\sum_{i=1}^m X_i\sum_{j\in M_i}X_j\right] } + c_2(h)\EW\left[\sum_{i=1}^m |X_i|\left(\sum_{j\in M_i}X_j\right)^2 \right],
\end{eqnarray}
which in turn is equal to the right-hand side of \eqref{newY}. This concludes the proof of Proposition~\ref{propYnew}.
\end{proof}
\noindent
\begin{proof}[Proof of Theorem~\ref{asGausscolored}] It suffices to show that for all $d \in \mathbb N$ and $(\alpha_1, \ldots, \alpha_d) \neq (0,\ldots, 0)$,  the linear combination 
\begin{equation}\label{lincombS}
\bar Z_m := \alpha_0^{} Z^{(m)}_{\lfloor\rho_1^{} m\rfloor} + \alpha_1\left(Z^{(m)}_{\lfloor\rho_2^{} m\rfloor}-Z^{(m)}_{\lfloor\rho_1^{} m\rfloor }\right) + \cdots +  \alpha_d\left(Z^{(m)}_{m}-Z^{(m)}_{\lfloor \rho_{d-1}^{} m\rfloor }\right) 
\end{equation} is asymptotically Gaussian as $m \to \infty$. For  $m\in \N$ we put
\begin{equation}\label{defan}
a_i^{(m)} := \alpha_g \mbox{ \rm  if } \lfloor \rho_{g-1} m\rfloor  < i \le \lfloor \rho_{g} m\rfloor,    \quad   i=1,\ldots m;   \quad g=1,\ldots, d.
\end{equation}
It is then readily checked that $\bar Z_m$ defined in \eqref{lincombS} satisfies
\begin{equation}
\label{defSn2}\bar Z_m = \sum_{i=1}^m a_i^{(m)} Y_i^{(m)}, \quad m \in \N,
\end{equation}
and thus fits into the frame of Proposition \ref{propYnew}, with $\bar S_m := \bar Z_m$. To use this proposition we will show that under the assumptions \eqref{drei}, \eqref{vier}, \eqref{Covineqn} and \eqref{variancecomp} and with $a_i = a_i^{(m)}$ from \eqref{defan}, both summands in the right-hand side of \eqref{newY} converge to $0$ as $m \to \infty$. For notational convenience we will for the rest of this proof suppress the superscript~$m$ in the equivalence relation $\stackrel m\sim$, in the coefficients $a_i^{(m)}$  and in the random variables $Y_i^{(m)}$, $\mathscr I^{(m)}$,  $\mathscr J^{(m)}$,  $\mathscr K^{(m)}$,  $\mathscr L^{(m)}$.
  
  For a constant $C$ not depending on $m$ we have
\begin{equation}\label{Ysquare}
\Var \left[ \sum\limits_{i,j=1}^m Y_i^2a_ia_j I_{\{i\sim j\}}\right] \le C \Var \left[ \sum\limits_{i,j=1}^m Y_i^2 I_{\{i\sim j\}}\right],
\end{equation}
\begin{equation}\label{Ycube}
\EW\left[ \sum\limits_{i,j,k=1}^m |Y_i|^3 |a_i|a_ja_kI_{\{i\sim j\sim k\}} \right]  \le C\EW\left[|Y|^3\right]\sum\limits_{i,j,k=1}^m \WS(i\sim j \sim k).
\end{equation}
In order to bound $\Var \left[ \sum\limits_{i,j=1}^m Y_i^2 I_{\{i\sim j\}}\right]$ from above, we decompose the variance with respect to~$\mathscr P^{(m)}$ and first note that
\begin{equation*}
\EW\left[ \left. \sum\limits_{i,j=1}^m Y_i^2 I_{\{i\sim j\}}\right\vert \mathscr P^{(m)}\right] =  \EW\left[Y^2\right] \sum\limits_{i,j=1}^m  I_{\{i\sim j\}}.
\end{equation*}
The variance of the latter is 
\begin{equation*} \EW\left[Y^2\right]^2 \sum_{i,j,k,\ell \in [m]} \,  \Cov\left[I_{\{i\sim j\}}, I_{\{k\sim \ell\}}\right]
\end{equation*}
which by assumption  \eqref{Covineqn} is not larger than
\begin{equation} \label{EVar}\EW\left[Y^2\right]^2    \sum_{i,j,k,\ell \in [m]} \WS(i\sim j \sim k\sim \ell).
\end{equation}
Next we note that
\begin{eqnarray*}\label{VarE}\begin{split}
\Var\left[ \left. \sum\limits_{i,j=1}^m Y_i^2I_{\{i\sim j\}}\right\vert \mathscr P^{(n)}\right] &=&&  \sum\limits_{i,j,k,\ell=1}^m \Cov\left[\left.Y_i^2I_{\{i\sim j\}},Y_k^2 I_{\{k\sim \ell\}} \right\vert \mathscr P^{(m)}\right] \\
&=&& \Var\left[Y^2\right] \sum\limits_{i,j,k,\ell =1}^m  I_{\{i\sim j\sim k \sim \ell\}}.
\end{split}
\end{eqnarray*}
Taking expectation of the latter and adding this to \eqref{EVar} we obtain \begin{equation*}
\Var \left[ \sum\limits_{i,j=1}^m Y_i^2I_{\{i\sim j\}}\right] \le  \EW\left[Y^4\right]  \sum_{i,j,k,\ell \in [m]} \WS(i\sim j \sim k\sim \ell) = O\left(m^4 \WS\left(\mathscr I \sim \mathscr J \sim \mathscr K \sim \mathscr L\right)\right),\end{equation*}
which because of \eqref{vier} is $o\left(m^2  \WS\left(\mathscr I \sim \mathscr J\right)\right)^2$.

Likewise the right-hand side of \eqref{Ycube} is $o\left(m^2  \left(\WS\left(\mathscr I \sim \mathscr J\right)\right)^{3/2}\right)$. From \eqref{defSn} and \eqref{defSn2} we get 
\begin{equation}\label{sigmanstrich}
\bar \sigma_m^2 = \sum_{i,j\in [m]}a_i^{(m)}a_j^{(m)} \Cov\left[Y_i,Y_j\right] = \Var[Y] \sum_{i,j\in[m]}a_i^{(m)}a_j^{(m)} \WS(i\sim j), 
\end{equation}
which due to assumption \eqref{variancecomp} is bounded from below by $\widetilde C m^2\WS(\mathscr I \sim \mathscr J)$.
Thus under the assumptions of Theorem \ref{asGausscolored}  the right-hand side of \eqref{newY} converges to 0 as $m\to \infty$, which shows that Theorem \ref{asGausscolored} is a consequence of Proposition~\ref{propYnew}. 
\end{proof}

\section{Pair coalescence probabilities: Proof of Proposition  \ref{probrel}}\label{Proofprobrel}
   We now return to the setting of Section~\ref{secIntro}. Let $R^{(i)}_k$, $i\in \mathbb Z$, $k\in \mathbb N$ be independent copies of $R$, and define
\begin{equation}\label{defA}
A_i := \left\{n\in \mathbb Z : i-R_1^{(i)} - \cdots -R_j^{(i)} = n \mbox{ for some } j \ge 0\right\}.
\end{equation}
Note that the  $A_i$ are independent and thus can be seen as decoupled versions of the ancestral lineages of the individuals $i \in \mathbb Z$. In particular they do not coalesce if they meet. Decomposing with respect to the most recent collision time one obtains immediately (cf. \cite[p. 711]{HS})  that for $i > 0$
\begin{equation}\label{PE}
\WS\left(A_0\cap A_i\neq \emptyset\right)\sum_{m \ge 0} q_m^2 =   \EW\left[\left\vert A_0 \cap A_i\right\vert\right] = \sum_{m \ge 0} q_mq_{m+i}, 
\end{equation}
 hence 
 \begin{equation}\label{connection}
 \WS(0\sim i) = \frac{\sum_{m \ge 0} q_mq_{m+i}}{\sum_{m \ge 0} q_m^2}, \quad i \ge 0. 
 \end{equation}
We will now assume (in accordance with the assumptions in Proposition~\ref{probrel}) that the weights $q_n$ of the renewal measure defined in \eqref{defqn} have the property
\eqref{qas}. Under this condition we will prove
\begin{proposition}\label{PropLemma3c}
As $m\to \infty$,
\begin{equation} \label{claimprop}
\sum_{j\ge 1} q_j q_{m+j} \sim  \frac 1{\Gamma(\alpha)^2\Gamma(1-\alpha)^2}  \frac { m^{2\alpha-1}}{L(m)^2} \int_0^\infty (1+x)^{\alpha-1} x^{\alpha-1} \mathrm{d}x.
\end{equation}
\end{proposition}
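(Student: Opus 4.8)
The plan is to substitute the strong-renewal asymptotics \eqref{qas} into the sum and to read off the integral on the right-hand side of \eqref{claimprop} as the limit of a Riemann sum. Write $c:=\tfrac{1}{\Gamma(\alpha)\Gamma(1-\alpha)}$, so that \eqref{qas} reads $q_n\sim c\,n^{\alpha-1}/L(n)$, and set $g(x):=x^{\alpha-1}(1+x)^{\alpha-1}$ for $x>0$. Since $0<\alpha<\tfrac12$, the integrand $g$ is integrable on $(0,\infty)$: it behaves like $x^{\alpha-1}$ near $0$ (integrable as $\alpha>0$) and like $x^{2\alpha-2}$ near $\infty$ (integrable as $2\alpha-2<-1$), and in fact $\int_0^\infty g(x)\,dx=B(\alpha,1-2\alpha)$. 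After normalising and rescaling the summation index by $j\approx xm$, I would write
\begin{equation*}
\frac{L(m)^2}{c^2\,m^{2\alpha-1}}\sum_{j\ge1}q_j\,q_{m+j}\;=\;\frac1m\sum_{j\ge1}h_m(j),\qquad h_m(j):=\frac{L(m)^2}{c^2\,m^{2\alpha-2}}\,q_j\,q_{m+j},
\end{equation*}
so that \eqref{claimprop} is equivalent to $\tfrac1m\sum_{j\ge1}h_m(j)\to\int_0^\infty g(x)\,dx$.

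For the pointwise limit, fix $x>0$ and put $j=\lceil xm\rceil$, so that $j\sim xm$ and $m+j\sim(1+x)m$. By \eqref{qas} together with the slow variation of $L$ (which gives $L(xm)/L(m)\to1$ and $L((1+x)m)/L(m)\to1$) one obtains $q_j\sim c\,x^{\alpha-1}m^{\alpha-1}/L(m)$ and $q_{m+j}\sim c\,(1+x)^{\alpha-1}m^{\alpha-1}/L(m)$, hence $h_m(\lceil xm\rceil)\to x^{\alpha-1}(1+x)^{\alpha-1}=g(x)$. Viewing $\tfrac1m\sum_{j\ge1}h_m(j)$ as the integral over $(0,\infty)$ of the step function $x\mapsto h_m(\lceil xm\rceil)$, it then remains to justify the interchange of limit and integral by dominated convergence.

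The domination is the technical heart of the argument, and here I would invoke \emph{Potter's bounds} for the regularly varying sequence $q_n$ (which has index $\alpha-1$, since $1/L$ is slowly varying): for any small $\delta>0$ there are a constant $C_\delta$ and an index $n_0$ with $q_n/q_m\le C_\delta(n/m)^{\alpha-1-\delta}$ for $n_0\le n\le m$ and $q_n/q_m\le C_\delta(n/m)^{\alpha-1+\delta}$ for $n\ge m\ge n_0$. Distinguishing the cases $x\le1$ (so $j\le m$) and $x\ge1$ (so $j\ge m$), applying these bounds with the reference point $m$, and using $q_m^2\,L(m)^2/(c^2m^{2\alpha-2})\to1$, one reaches a bound $h_m(\lceil xm\rceil)\le\Phi(x)$, valid for all large $m$, where $\Phi(x)=C'x^{\alpha-1-\delta}$ on $(0,1]$ and $\Phi(x)=C'x^{2\alpha-2+2\delta}$ on $(1,\infty)$. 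Choosing $\delta<\min(\alpha,\tfrac12-\alpha)$ makes $\Phi$ integrable on $(0,\infty)$, which is exactly where $0<\alpha<\tfrac12$ enters once more. The finitely many indices $j<n_0$, to which Potter's bounds do not apply, are collected separately: there $q_j\le1$ and $q_{m+j}=O(m^{\alpha-1}/L(m))$, so their combined contribution to $\tfrac1m\sum_j h_m(j)$ is $O(L(m)/m^{\alpha})\to0$.

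Combining the pointwise convergence with the integrable majorant $\Phi$, dominated convergence yields $\tfrac1m\sum_{j\ge1}h_m(j)\to\int_0^\infty g(x)\,dx=B(\alpha,1-2\alpha)$, which is precisely \eqref{claimprop}; together with \eqref{connection} this also recovers the constant $C_\alpha$ of \eqref{Calpha}. I expect the only delicate point to be the uniformity in the domination step: one must ensure that the Potter constants and the prefactor $q_m^2\,L(m)^2/(c^2m^{2\alpha-2})$ are controlled uniformly in $m$ (and not merely for each fixed $x$), and that $\Phi$ is simultaneously integrable at both endpoints --- all of which rest on the standing assumption $0<\alpha<\tfrac12$ and on the slow variation of $L$.
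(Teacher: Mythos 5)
Your proof is correct, but it takes a genuinely different route from the paper's. The paper first reduces, via the Karamata representation theorem, to weights of the pure form $r_n=n^{\alpha-1}K(n)$ with $K$ as in \eqref{eq2} (Lemma~\ref{Karamatalemma}), shows that $(r_n)$ is ultimately decreasing, and then splits the sum into three ranges: the tail $j\ge Ni$ is handled by monotonicity together with the hand-derived bound $K(mi)/K(i)\le m^\delta$ of \eqref{eq8}; the head $j\le \eta i$ is handled by Karamata's Tauberian theorem \cite[Theorem~5 on p.~447]{fellervol2}, giving a contribution of order $\eta^\alpha/\alpha$; and on the middle range $\eta i\le j\le Ni$ the ratios $K(j)/K(i)$ converge to $1$ uniformly, so a Riemann-sum argument closes the proof, with general $q_n$ recovered by absorbing the convergent prefactors $C_nD_n$. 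You instead run a single dominated-convergence argument on the step function $x\mapsto h_m(\lceil xm\rceil)$, with the majorant supplied by Potter's bounds (\cite[Theorem~1.5.6]{BGT}) for the regularly varying function $t^{\alpha-1}/L(t)$, transferred to $q_n$ via \eqref{qas}; your choice $\delta<\min(\alpha,\tfrac12-\alpha)$ makes the majorant integrable at both endpoints, and your separate treatment of the finitely many indices $j<n_0$, contributing $O(L(m)m^{-\alpha})$, correctly plays the role of the paper's remark that omitting finitely many summands does not affect the asymptotics. The uniformity issue you flag at the end is indeed the crux, but it is exactly what Potter's theorem delivers: the constant $C_\delta$ is uniform in both arguments above a single threshold, and the prefactor $q_m^2L(m)^2/(c^2m^{2\alpha-2})\to 1$ is bounded in $m$, so the domination is legitimate. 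As for what each approach buys: yours is shorter and needs neither the ultimate monotonicity of $(r_n)$ nor Feller's Tauberian theorem, at the price of invoking Potter's bounds (which are themselves proved from the Karamata representation); the paper's argument is more self-contained, extracting by hand from \eqref{eq2} precisely the two ingredients --- a Potter-type bound for the tail and monotonicity plus Tauberian control for the head --- that your single integrable majorant packages in one step.
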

The asymptotics \eqref{assim} claimed in Proposition~\ref{probrel} is  immediate from \eqref{connection} combined with \eqref {claimprop}.
\\\\
The remainder of this section is devoted to the proof of Proposition \ref{PropLemma3c}.
We will prove \eqref{claimprop} first under a special assumption on  the Karamata representation of the slowly varying function $L$.
\begin{lemma} \label{Karamatalemma}
Let $\alpha \in (0,\frac 12)$, and consider
\begin{equation} \label{eq1}
r_n := n^{\alpha-1} K(n)
\end{equation}
where $K(n)$ is of the form
\begin{equation}\label{eq2}
K(n) = \exp\left(\int_B^n \frac{l(t)}t\, \mathrm{d}t\right)
\end{equation}
with $B$ a positive constant and $l(t)$, $t\ge B$, a bounded measurable function converging to $0$ as $t\to \infty$.
Then $(r_n)$ is ultimately decreasing, and 
\begin{equation} \label{eq3}
\sum_{j\ge 1} r_j r_{i+j} \sim K(i)^2 i^{2\alpha-1} \int_0^\infty (1+x)^{\alpha-1} x^{\alpha-1} \mathrm{d}x \quad\mbox{as } i \to \infty,
\end{equation}
with
\begin{equation}\label{Betaintegral}
\int_0^\infty (1+x)^{\alpha-1} x^{\alpha-1} dx =  B(\alpha, 1-2\alpha) = \frac{\Gamma(\alpha) \Gamma(1-2\alpha)}{\Gamma(1-\alpha)}.
\end{equation}
\end{lemma}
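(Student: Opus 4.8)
The plan is to rescale by $x=j/i$ and recognise $\sum_{j\ge1} r_j r_{i+j}$, after division by $K(i)^2 i^{2\alpha-1}$, as a Riemann sum converging to $\int_0^\infty \Phi(x)\,\mathrm{d}x$, where $\Phi(x):=x^{\alpha-1}(1+x)^{\alpha-1}$. Note that $\Phi$ is integrable exactly because $\alpha>0$ (integrability at the origin, where $\Phi(x)\sim x^{\alpha-1}$) and $\alpha<\tfrac12$ (integrability at infinity, where $\Phi(x)\sim x^{2\alpha-2}$); these two constraints will reappear as precisely what makes the head and tail of the rescaled sum negligible. The monotonicity claim is quick: writing $f(t):=t^{\alpha-1}K(t)$ with $K$ as in \eqref{eq2}, one has $\tfrac{\mathrm{d}}{\mathrm{d}t}\log f(t)=(\alpha-1+l(t))/t$, which is negative for all large $t$ since $\alpha-1<0$ and $l(t)\to0$; hence $(r_n)=(f(n))$ is ultimately decreasing.

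The engine of the argument is an elementary Potter-type bound read directly off the representation \eqref{eq2}. Since $l$ is bounded and $l(u)\to0$, for every $\delta>0$ there is $u_\delta$ with $|l(u)|\le\delta$ for $u\ge u_\delta$, so that
\[
(s/t)^{-\delta}\le \frac{K(s)}{K(t)}\le (s/t)^{\delta},\qquad s\ge t\ge u_\delta .
\]
The same representation yields, via dominated convergence applied to $\log\bigl(K(ix)/K(i)\bigr)=\int_1^x l(iv)/v\,\mathrm{d}v$, the uniform convergence $K(ix)/K(i)\to1$ as $i\to\infty$ for $x$ in any compact subset of $(0,\infty)$.

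With these tools I would fix $0<\epsilon<M$ and split
\[
\sum_{j\ge 1} r_j r_{i+j}=\sum_{1\le j\le \epsilon i} r_j r_{i+j}+\sum_{\epsilon i< j\le M i} r_j r_{i+j}+\sum_{j> Mi} r_j r_{i+j},
\]
dividing each piece by $K(i)^2 i^{2\alpha-1}$. In the middle range the uniform convergence gives $K(ix)K(i(1+x))/K(i)^2\to1$ uniformly for $x=j/i\in(\epsilon,M]$, so this piece equals $\tfrac1i\sum_{\epsilon i<j\le Mi}\Phi(j/i)\,(1+o(1))$, a Riemann sum tending to $\int_\epsilon^M\Phi$. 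For the head, monotonicity gives $r_{i+j}\le r_i=i^{\alpha-1}K(i)$ for $j\le\epsilon i$, and Karamata's theorem $\sum_{j\le \epsilon i} r_j\sim\tfrac1\alpha(\epsilon i)^\alpha K(\epsilon i)\sim\tfrac1\alpha\epsilon^\alpha i^\alpha K(i)$ (valid as $\alpha-1>-1$) bounds the head by $O(\epsilon^\alpha)$. For the tail, monotonicity gives $r_{i+j}\le r_j$, so $\sum_{j>Mi}r_jr_{i+j}\le\sum_{j>Mi}r_j^2$, and the Karamata tail estimate $\sum_{j>n}j^{2\alpha-2}K(j)^2\sim\tfrac1{1-2\alpha}n^{2\alpha-1}K(n)^2$ (valid as $2\alpha-2<-1$) bounds it by $O(M^{2\alpha-1})$. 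Sending first $i\to\infty$ and then $\epsilon\to0$, $M\to\infty$ turns $\int_\epsilon^M\Phi$ into $\int_0^\infty\Phi$ while both error terms vanish, giving \eqref{eq3}; the identity \eqref{Betaintegral} is the standard Beta evaluation $\int_0^\infty x^{a-1}(1+x)^{-(a+b)}\,\mathrm{d}x=B(a,b)$ with $a=\alpha$, $b=1-2\alpha$.

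The main obstacle is the simultaneous uniform control of the slowly varying factor $K$ over the entire summation range: the pointwise limit $K(ix)/K(i)\to1$ is immediate, but converting the rescaled sum into the integral requires domination at both ends of the range, and it is exactly the two-sided constraint $0<\alpha<\tfrac12$ that renders the head $O(\epsilon^\alpha)$ and the tail $O(M^{2\alpha-1})$ genuinely small. The remainder is bookkeeping with Karamata's theorem on partial sums of regularly varying sequences.
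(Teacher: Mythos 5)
Your proof is correct and follows essentially the same route as the paper's: ultimate monotonicity from the Karamata representation, a three-way split of the sum at $\epsilon i$ and $Mi$ (the paper's $\eta i$ and $Ni$), with the head controlled by monotonicity plus Karamata's theorem on partial sums, the middle by uniform slow variation and Riemann sums, and the tail by $r_{i+j}\le r_j$ plus a regular-variation estimate. The only cosmetic difference is in the tail, where you invoke the Karamata tail theorem for $\sum_{j>Mi}r_j^2$ directly while the paper obtains the same bound by a blockwise comparison $\sum_{m\ge N}\left(r_{mi}/r_i\right)^2$ with the Potter-type bound $K(mi)/K(i)\le m^{\delta}$ and dominated convergence.
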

\begin{proof}
a) The equality \eqref{Betaintegral} is readily checked by substituting $y= \frac x{1+x}$. 

b) The fact that $(r_n)$ is ultimately decreasing follows from \eqref{eq1} together with the Karamata representation \eqref{eq2} of $K(n)$. To see this, we argue as follows, putting $\beta := 1- \alpha$. Since $l(t)$ tends to zero for $t\rightarrow \infty$ we know that there exists $n_0\in\mathbb N$ such that for all $t\geq n_0$ one has $l(t)<\beta$. This implies
\begin{eqnarray*}
  \frac{r_n}{r_{n+1}} &=& \frac{n^{-\beta}}{(n+1)^{-\beta}} \cdot \frac{K(n)}{K(n+1)}\\
                      &=& \frac{ n^{-\beta}}{(n+1)^{-\beta }} \exp\left( - \int_n^{n+1} \frac{l(t)}{t} \mathrm dt \right)\\
                      &=& \exp\left( \beta \left( \ln (n+1)-\ln(n) \right) - \int_n^{n+1} \frac{l(t)}{t} \mathrm dt \right)\\
                      &=& \exp\left( \int_n^{n+1} \left( \frac{\beta}{t}-\frac{l(t)}{t}
                       \right) \mathrm dt \right).\\ 
\end{eqnarray*}
Since by assumption the integrand on the right-hand side is strictly positive for $n\geq n_0$, we obtain that $(r_n)_n$ is decreasing for $n\geq n_0$. 
\\\\
c) In view of \eqref{eq1}, the claimed asymptotics \eqref{eq3} is equivalent to
\begin{equation}\label{eq4}
\lim_{i\to \infty} \frac 1i \sum_{j\ge 1} \frac {r_j}{r_i} \frac{r_{i+j}}{r_i} = \int_0^\infty (1+x)^{\alpha-1} x^{\alpha-1} \mathrm{d}x.\end{equation}
We now set out to prove \eqref{eq4}. To this purpose we show first that for all $\varepsilon > 0$ there exists an $N \in \mathbb N$ such that for all sufficiently large $i$

\begin{equation}\label{eq5}
\frac 1i \sum_{j\ge Ni} \frac {r_j}{r_i} \frac{r_{i+j}}{r_i}< \varepsilon.
\end{equation}
Since $(r_n)$ is ultimately decreasing, \eqref{eq5} will follow if we can show that exists an $N \in \mathbb N$ such that for all sufficiently large $i$
\begin{equation}\label{eq6}
\frac 1i \sum_{j\ge Ni} \left(\frac {r_j}{r_i}\right)^2< \varepsilon.
\end{equation}
Again because of the ultimate monotonicity of $(r_n)$, the left-hand side. of \eqref{eq6} is for sufficiently large $i$ bounded from above by
\begin{equation}\label{eq7}
 \sum_{m=N}^\infty \left(\frac {r_{mi}}{r_i}\right)^2 = \sum_{m=N}^\infty m^{2\alpha - 2} \left(\frac{K(mi)}{K(i)}\right)^2.
\end{equation}
Using \eqref{eq2} one obtains that for any $\delta >0$ and $i$ so large that  $l(t)< \delta$ for all $t\geq i$,
\begin{equation}\label{eq8}
 \frac{K(mi)}{K(i)} = \exp\left( \int_i^{mi} \frac{l(t)}{t} \mathrm dt \right)\leq \exp\left( \delta \left( \ln (mi)-\ln (i) \right) \right)  \le m^\delta,
\end{equation}
which implies by dominated convergence that for sufficiently large $N$ the right-hand side of \eqref{eq7} is smaller than $\varepsilon$ for all sufficiently large $i$. We have thus proved~\eqref{eq5}.

Next we show that for all $\varepsilon > 0$ there exists an $\eta \in \mathbb N$ such that for all sufficiently large $i$
\begin{equation}\label{eq9}
\frac 1i \sum_{j\le \eta i} \frac {r_j}{r_i} \frac{r_{i+j}}{r_i}< \varepsilon.
\end{equation}
Again by ultimate monotonicity of $(r_n)$ which gives us $\frac{r_{i+j}}{r_i}\leq 1$ for $i$ large enough, for this it suffices to show that for all $\varepsilon > 0$ there exists an $\eta > 0 \in \mathbb N$ such that for all sufficiently large $i$
\begin{equation}\label{eq10}
\frac 1i \sum_{j\le \eta i} \frac {r_j}{r_i}< \varepsilon.
\end{equation}
From \cite[Theorem~5 on p.~447]{fellervol2} we obtain that 
\begin{equation}\label{eq11}
 \sum_{j\le \eta i} r_j \sim \frac 1\alpha (\eta i)^\alpha K(\lfloor \eta i\rfloor) \quad \mbox{as } i\to \infty,
\end{equation}
and hence
\begin{equation}\label{eq12}
 \frac 1i \sum_{j\le \eta i} \frac{r_j}{r_i} \sim \frac 1\alpha \eta ^\alpha \frac{K(\lfloor \eta i\rfloor)}{K(i)}\sim \frac{\eta^\alpha}{\alpha}  \quad \mbox{as } i\to \infty,
\end{equation}
which proves \eqref{eq10}, and hence also \eqref{eq9}. (The last asymptotic is by the fact that $K$ is slowly varying.)

In view of \eqref{eq1}, \eqref{eq5} and \eqref{eq9}, for proving \eqref{eq3} it remains to show that
\begin{equation}\label{eq13}
\lim_{i\to \infty}\frac 1i \sum_{\eta i \le j\le Ni} 
\frac {K(j)}{K(i)} \frac{K(i+j)}{K(i)} \left(1+\frac ji\right)^{\alpha-1}   \left(\frac ji\right)^{\alpha-1}   = \int_\eta^N (1+x)^{\alpha-1} x^{\alpha-1} \mathrm{d}x.
\end{equation}
From \eqref{eq2} one derives that
\begin{equation*}\label{eq14}
\lim_{i\to \infty} \sup_{\eta i \le j \le (N+1)i}  \left | \frac {K(j)}{K(i)} - 1\right | = 0.
\end{equation*}
Hence \eqref{eq13} boils down to a convergence of Riemann sums to its integral limit, and the proof of  Lemma \ref{Karamatalemma} is done.  
\end{proof}
Let us now complete the proof of Proposition \ref{PropLemma3c}.
\begin{proof}
The asymptotics \eqref{qas} can be rewritten as
\begin{equation*} \label{eq1_1}
q_n = C_n n^{\alpha-1} \tilde L(n)
\end{equation*}
where $\tilde L(n)$ is a slowly varying function and $C_n \to \frac 1{\Gamma(\alpha)\Gamma(1-\alpha)}  > 0$.

As in the proof of Lemma \ref{Karamatalemma} it suffices to show that
\begin{equation}\label{eq16}
\frac 1i \sum_{j\ge 1} \frac {q_j}{q_i} \frac{q_{i+j}}{q_i} \to \int_0^\infty (1+x)^{\alpha-1} x^{\alpha-1} \mathrm{d}x.
\end{equation}
Because of the Karamata representation theorem (see e.g. Theorem~1.3.1 in \cite{BGT}) there exists a $K(n)$ satisfying \eqref{eq2} and a sequence $D_n$ converging to a positive constant $D$ such that
\begin{equation}\label{eq17}
\tilde L(n) = D_n K(n), \quad n=1,2,\ldots
\end{equation}
Defining $r_n$ as in \eqref{eq1} we have
\begin{equation*}
q_n = D_n C_n r_n, \quad n=1,2,\ldots
\end{equation*}
Since the asymptotics of neither the left-hand side of \eqref{eq4} nor that of the left-hand side of \eqref{eq16} reacts to the omission of a fixed finite number of summands, we see that  \eqref{eq4} carries over to~\eqref{eq16}. 
\end{proof}

\section{Depth of most recent common ancestor: Proof of Proposition \ref{propMRCA}}\label{ProofMRCA}
In this section we will assume that the weights $q_n$ of the renewal measure defined in \eqref{defqn} obey the asymptotics
\eqref{qas}, see the discussion after equation~\eqref{qas}.
  For $i, m \in \mathbb N$ we set
  \begin{eqnarray*}
    f_i(m)=\WS\left(\mathscr M(0,i) =-m \right), \quad 
    F_i(m)=\WS\left(\mathscr M(0,i) \geq-m \right).
  \end{eqnarray*}
 For the independent  couplings $A_i$, $i\in \mathbb Z$, of the ancestral lines of $0$ and $i$ as defined in  \eqref{defA} we have for all $r > 0$ and $i\in \mathbb Z$
  \begin{eqnarray*}
    \sum_{k=0}^{ri} q_k q_{k+i} 
     &=&\EW\left[ | A_0 \cap A_i \cap \{0,\ldots, -ri \} | \right]\\
    &=& \sum_{k=0}^{ri} \left(f_i(k) \sum_{l=0}^{ri-k} q_l^2\right)
   \quad  \le \quad  \left( \sum_{l=0}^{ri} q_l^2  \right) F_i(ri),
  \end{eqnarray*}
and consequently
  \begin{eqnarray}\label{tailest}
   \WS(D_i \le ri) =  F_i(ri) \geq \frac{ \sum_{k=0}^{ri} q_k q_{k+i} }{ \sum_{l=0}^{ri} q_l^2  }.
  \end{eqnarray}
As in the proof of Proposition \ref{PropLemma3c} we obtain
  \begin{equation}\label{incomp}
    \sum_{k=0}^{ri} q_k q_{k+i} \sim \frac{i^{2\alpha -1}}{L(i)^2} \frac{1}{\Gamma(1-\alpha)^2\Gamma(\alpha)^2} \int_0^r x^{\alpha -1}(1+x)^{\alpha -1}\mathrm{d}x \quad  \mbox{ as } i \to \infty.
  \end{equation}
Together with \eqref{tailest} and the asymptotics \eqref{assim} this gives
  \begin{equation}\label{lowerest}
    \liminf_{i\to \infty}\WS\left( D_i\le ri\, \vert\,  0\sim i\right) \geq   B(\alpha, 1-2\alpha)^{-1} \int_0^r x^{\alpha -1}(1+x)^{\alpha -1}\mathrm{d}x.
      \end{equation}
For the upper estimate we choose some arbitrary $\varepsilon>0$ and observe
  \begin{eqnarray*}
   \sum_{k=0}^{(r+\varepsilon)i} q_k q_{k+i}
    &=& \EW\left[ | A_0 \cap A_i \cap \{0,\ldots, -(r+\varepsilon)i \} | \right]\\
    &=& \sum_{k=0}^{(r+\varepsilon)i} \left(f_i(k) \sum_{l=0}^{(r+\varepsilon)i-k} q_l^2\right)
    \quad \geq \quad \sum_{k=0}^{ri} \left(f_i(k) \sum_{l=0}^{(r+\varepsilon)i-k} q_l^2\right)\\
    &\geq& \left( \sum_{l=0}^{\varepsilon i} q_l^2  \right) \sum_{k=0}^{ri} f_i(k)
    \qquad =  \quad  \left( \sum_{l=0}^{\varepsilon i} q_l^2  \right) F_i(ri).
  \end{eqnarray*}
  Using \eqref{incomp}, now with  $r+\varepsilon$ instead of $r$, we get 
 \begin{equation*}\label{upperest}
    \limsup_{i\to \infty}\WS\left( D_i\le ri\, \vert\,  0\sim i\right)  \le   B(\alpha, 1-2\alpha)^{-1} \int_0^{r+\varepsilon} x^{\alpha -1}(1+x)^{\alpha -1}\mathrm{d}x.
      \end{equation*}
Since $\varepsilon$ was arbitrary, this together with  \eqref{lowerest}  gives the assertion of Proposition~\ref{propMRCA}.
\section{Triplet and quartet coalescence probabilities: Proof of Proposition \ref{3and4}}\label{Proofthreefour}
In this section we will assume that the weights $q_n$ of the renewal measure defined in \eqref{defqn} obey the asymptotics
\eqref{qas}.
  We now turn to the asymptotic analysis of triplet and quartet coalescence probabilities.
  \subsection{Triplet coalescence probabilities}\label{tripletproof}
\begin{lemma}\label{prop:3er}
  For all $r>0$ and $i\in\N$ we have for a slowly varying function $\tilde L$ not depending on $r$
  \begin{equation}\label{tripletest}
   \WS\left(0\sim i \sim \lfloor(1+r)i\rfloor  \right)\leq   \left(r^{\alpha-1}+r^{2\alpha-1} \right) \tilde L(i)\,  i^{4\alpha -2}.
\end{equation}
\end{lemma}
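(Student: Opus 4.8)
The plan is to use that in the (fixed) graph $\mathcal G_\mu$ every vertex has a unique outgoing edge, so the ancestral lineage $\ell(v)$ of $v$ is the deterministic path $v,\,v-R_v,\,v-R_v-R_{v-R_v},\ldots$, and, writing $n:=\lfloor(1+r)i\rfloor$, the event $\{0\sim i\sim n\}$ holds exactly when the three paths $\ell(0),\ell(i),\ell(n)$ share a common vertex. Just as in the derivation of \eqref{PE}, two lineages are, up to their first meeting, governed by disjoint families of the $R_\bullet$ and therefore behave like the independent decoupled copies $A_\bullet$ from \eqref{defA}; I will invoke this decoupling repeatedly. I will also abbreviate $g(d):=\WS(0\sim d)$, which by \eqref{connection} is the meeting probability of two independent lineages started at distance $d$, and which by Proposition~\ref{probrel} satisfies $g(d)\sim C_\alpha\,d^{2\alpha-1}/L(d)^2$.

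First I would decompose $\{0\sim i\sim n\}$ according to the \emph{shallowest} of the three pairwise meetings (the ``cherry''), i.e.\ according to which of the pairs $\{i,n\}$, $\{0,i\}$, $\{0,n\}$ meets first; the merged lineage must then meet the remaining starting point. Bounding the probability that a prescribed pair \emph{first} meets at a prescribed vertex by the probability that both its lineages merely pass through that vertex (a product of two renewal weights $q_\bullet$), and using independence across the cherry for the second meeting, I obtain
\begin{align*}
\WS(0\sim i\sim n)\ \le\ &\sum_{s\ge 0} q_s\,q_{s+(n-i)}\,g(|i-s|) + \sum_{c\ge 0} q_c\,q_{c+i}\,g(n+c)\\
&+ \sum_{c\ge 0} q_c\,q_{c+n}\,g(i+c).
\end{align*}
Here the first sum records the $\{i,n\}$--cherry (meeting at position $i-s$, then the merge meeting $\ell(0)$ at distance $|i-s|$), while the second and third sums record the $\{0,i\}$-- and $\{0,n\}$--cherries (meeting at position $-c$, then the merge meeting $\ell(n)$, resp.\ $\ell(i)$).

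Next I would estimate the three sums by the Riemann-sum/Karamata technique used in the proofs of Proposition~\ref{PropLemma3c} and of Proposition~\ref{propMRCA}, inserting the asymptotics $q_k\sim(\Gamma(\alpha)\Gamma(1-\alpha))^{-1}k^{\alpha-1}/L(k)$ from \eqref{qas} and the above asymptotics for $g$. Substituting $c=ix$ (resp.\ $s=ix$) and splitting the summation ranges at the scales $1$, $ri$ and $i$, one finds the second and third sums to be of order $(1+r)^{2\alpha-1}i^{4\alpha-2}$ and $(1+r)^{\alpha-1}i^{4\alpha-2}$, while the first sum --- after treating separately the sub-range $s\approx i$, in which the $\{i,n\}$-meeting sits near position $0$ --- is of order $\bigl(r^{2\alpha-1}+(1+r)^{\alpha-1}\bigr)i^{4\alpha-2}$. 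Collecting these and using $(1+r)^{\beta}\lesssim r^{\alpha-1}+r^{2\alpha-1}$ for $\beta\in\{\alpha-1,2\alpha-1\}$ (valid for all $r>0$ since $r^{\alpha-1}\ge 1$ when $r\le 1$) yields the announced bound, with the term $r^{\alpha-1}$ originating from the $\{0,n\}$--cherry and $r^{2\alpha-1}$ from the $\{0,i\}$-- and $\{i,n\}$--cherries.

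The main obstacle I anticipate is twofold. First, the Riemann-sum estimates must be carried out \emph{uniformly in} $r>0$, so that the slowly varying ratios $L(ri)/L(i)$, $L((1+r)i)/L(i)$, etc.\ can be absorbed into a single slowly varying $\tilde L(i)$ independent of $r$; this calls for the uniform (Potter-type) bounds for slowly varying functions together with the observation that the dominant mass in each sum localizes at scale $\asymp i$. Second, the degenerate regime of small $r$ (where $n$ is close to $i$ and the $\{i,n\}$-cherry meets at a nearly vanishing position) must be handled with care: this is precisely the regime in which $\WS(0\sim i\sim n)$ tends to the pair probability $\WS(0\sim i)\asymp i^{2\alpha-1}$, and one has to verify that $r^{2\alpha-1}i^{4\alpha-2}$ (hence a fortiori the stated bound) dominates it down to $r\asymp 1/i$.
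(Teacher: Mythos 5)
Your proposal is correct and follows essentially the same route as the paper's own proof: the paper likewise decomposes $\{0\sim i \sim \lfloor(1+r)i\rfloor\}$ according to which pair coalesces most recently, passes to the decoupled lineages $A_k$ of \eqref{defA} — making your ``first meeting at a prescribed vertex $\le q\cdot q$, then an independent fresh lineage for the second meeting'' step rigorous via $\WS\left(\max(A_k\cap A_\ell)=v\right)\le q_{k-v}q_{\ell-v}$ and the unions $B_{k,\ell}$ of independent lineages started at all intersection points — and thereby arrives at exactly your three sums $\sum_m q_m q_{m+\cdot}\sum_n q_n q_{n+\cdot}$ (your $g(d)$ equals $\sum_n q_nq_{n+d}/\sum_n q_n^2$ by \eqref{connection}), which it then evaluates by the same Karamata/Riemann-sum technique as in Proposition~\ref{PropLemma3c}, including your separate treatment of the diagonal range $s\approx i$ (the paper's $m=i$ term) and the absorption of the slowly varying ratios into a single $\tilde L$ independent of $r$. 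The minor discrepancies in your intermediate exponents for large $r$ (e.g.\ $(1+r)^{\alpha-1}$ where the integral actually produces $r^{4\alpha-2}$ when $\alpha>1/3$) are harmless, since all such terms are dominated by $r^{\alpha-1}+r^{2\alpha-1}$, just as in the paper.
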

\begin{proof}[Proof]
Let $(A_k)$ be the independent (non-merging) ancestral lineages defined in \eqref{defA}.
  We set
  \begin{equation*}
  A_{k,\ell} :=\begin{cases} A_{\max(A_k\cap A_\ell)} &\mbox{ if } A_k \cap A_\ell \neq \emptyset,
  \\ \emptyset  &\mbox{ if } A_k \cap A_\ell = \emptyset, \end{cases}
  \end{equation*}
  \begin{equation*}\label{defBkl}
  B_{k, \ell}:= \bigcup_{ g \in A_k \cap A_\ell } A_g.
\end{equation*}
In words, $B_{k, \ell}$ is the union of the (non-merging) ancestral lineages starting at all the points of intersection of $A_k$ and $A_\ell$. Distinguishing the 3 shapes of the ancestral tree of the individuals $0$, $i$ and $(1+r) i$ on  the event $E:= \{0\sim i \sim (1+r)i \}$ (and omitting  the Gauss brackets in $\lfloor (1+r)i\rfloor$ etc. for the sake of readability)  we have by
subadditivity 
\begin{eqnarray} 
  && \WS\left( 0\sim i \sim (1+r) i \right)\nonumber\\
  &\leq& \WS\left(E; \mathscr M(0,i)\ge  \mathscr M(0, (1+r)i   \right)+ \WS\left(E; \mathscr M(0,(1+r)i)\ge \mathscr M(0,i)   \right)\nonumber
  \\ && 
  + \WS\left(E;\mathscr M(i,(1+r)i)\ge \mathscr M(0,i)   \right)
  \nonumber
  \\
  &\leq&  \WS\left(A_{0,i} \cap A_{(1+r)i}\neq \emptyset  \right)+ \WS\left( A_{0,(1+r)i} \cap A_{i}\neq \emptyset \right)\nonumber + \WS\left(  A_{i,(1+r)i} \cap A_{0}\neq \emptyset \right) 
  \\
  &\leq&  \EW\left[ | A_{0,i} \cap A_{(1+r)i}|  \right]+ \EW\left[ | A_{0,(1+r)i} \cap A_{i}|  \right] + \EW\left[ | A_{i,(1+r)i} \cap A_{0}|  \right] \label{eq:triple_2}
  \\ 
  &\leq& \EW\left[ | 
         B_{0,i} \cap A_{(1+r)i}|  \right]+ \EW\left[ | B_{0,(1+r)i} \cap A_{i}|  \right] + \EW\left[ | B_{i,(1+r)i} \cap A_{0}|  \right] .\label{eq:triple}\nonumber
         \\&=&\sum_{m\geq 0}q_m q_{m+i} \sum_{n\geq 0 }q_n q_{n+(1+r)i+m}+\sum_{m\geq 0}q_m q_{m+(1+r)i} \sum_{n\geq 0 }q_n q_{n+i+m} \nonumber
         \\&&+
\sum_{m\ge 0}q_m q_{ri+m}\sum_{n\geq 0} q_n q_{n+i-m}. \nonumber
\end{eqnarray}
An inspection of the third summand on the  right-hand side leads to
     \begin{eqnarray*}
&&\sum_{m\ge 0}q_m q_{ri+m}\sum_{n\geq 0} q_n q_{n+i-m} \\
  & =& q_m q_{ri+i}\sum_{n\geq 0} q_n q_n  + \sum_{m\neq i}q_m q_{ri+m}\sum_{n\geq 0} q_n q_{n+i-m}\\
  & \le& C \left(q_i q_{(1+r)i} + \sum_{m\neq i}\frac{m^{\alpha-1}}{L(m)}\frac{(m+ri)^{\alpha-1}}{L(m+ri)}\frac{|i-m|^{2\alpha-1}}{L(|i-m|)^2} \right)
       \end{eqnarray*}
       \begin{eqnarray*}
  & \sim& C \left(\frac{i^{2\alpha-2}}{L(i)^2} + \frac{i^{4\alpha-2}}{L(i)^4}  \int_0^\infty x^{\alpha-1}(x+r)^{\alpha-1} |1-x|^{2\alpha-1} \mathrm{d}x\right)  \\
  &\le&   \tilde L(i) i^{4\alpha-2} r^{\alpha -1},
     \end{eqnarray*}
     where $\tilde L$ is a slowly varying function that dominates $C\left(\tfrac 1{L^2} + \tfrac 1{L^4}\right)$, and where the asymptotics is justified in the same way as \eqref{eq13} was derived first for slowly varying functions satisfying \eqref{eq2} and then, using the Karamata representation, for general slowly varying functions obeying \eqref{eq17}. The first and the second summand on the r.h.s of \eqref{eq:triple_2} can be analysed in an analogous manner, leading to the bounds $\tilde L(i) i^{4\alpha-2} r^{2\alpha -1}$  and $\tilde L(i) i^{4\alpha-2} r^{\alpha -1}$, respectively. 
\end{proof}
\begin{proof}[Proof of Proposition \ref{3and4} Part 1]
We  set out to show \eqref{trip}, and first observe that
\begin{equation}\label{eq63}
\sum_{i=0}^n \sum_{j=0}^n \WS\left(  0 \sim i  \sim j \right)\\
      = \sum_{i=0}^n  \WS\left(  0 \sim i  \right) + 2 \sum_{i=1}^n \sum_{j=1}^{n-i} \WS\left( 0 \sim i  \sim  i+j \right).
\end{equation}
By Lemma~\ref{prop:3er}
we get, noting that $r^{\alpha-1}+r^{2\alpha-1} \le 2r^{\alpha-1}+1 $,
  \begin{eqnarray} \label{secondsum}
  \begin{split}
\sum_{i=1}^n \sum_{j=1}^{n-i} \WS\left( 0 \sim i  \sim  i+j \right) &\le
  \sum_{i=1}^n \sum_{j=1}^{n-i} \tilde L(i)i^{4\alpha -2} \left( \left( \frac{j}{i} \right)^{\alpha -1}+ 1  \right)\\
      &\leq \tilde L(n)\left(\sum_{i=1}^n i^{3\alpha -1 } (n-i)^{\alpha  }+  n\sum_{i=1}^n i^{4\alpha -2 }\right)= O\left(n^{4\alpha+\delta}\right)
      \end{split}  
      \end{eqnarray}
 for each $\delta >0$. From \eqref{eq63} combined with \eqref{assim} and \eqref{secondsum} we obtain for each $\delta >0$  the estimate
  \begin{equation}
\label{analogouseq}\sum_{i=0}^n \sum_{j=0}^n \WS\left(  0 \sim i  \sim j \right) \le C n^{4\alpha+\delta},
\end{equation}
where the constant $C$ depends on $\delta$ but not on $n$. An analogous calculation for $k$ in place of $0$ shows that also
\begin{equation}\label{uniformink}
\sum_{i=0}^n \sum_{j=0}^n \WS\left(  k \sim i  \sim j \right) \le C n^{4\alpha+\delta}, \quad k \in [n],
\end{equation}
where $C$ can be chosen uniformly in $n$ and in $k  \in [n]$. (An intuitive reason for this uniformity comes from the fact that for each $k \in [n]$ and small $\varepsilon > 0$, the big majority of the pairs $(i,j) \in [n]^2$  leads to pairwise distances $|i-j|$, $|i-k|$ and $|j-k|$ that are all between $\varepsilon n$ and $n$.)
With this uniformity in $k$, \eqref{trip}  follows directly from \eqref{uniformink}.
\end{proof}

 \subsection{Quartet coalescence probabilities}
 \begin{lemma}\label{prop:4er}
  For all  $r_2>r_1>0$ and $i\in\N$ we have for a slowly varying function $\bar L$ not depending on $r$
\begin{eqnarray}
  &&\WS\left( 0 \sim i\sim \lfloor (1+r_1)i\rfloor \sim \lfloor(1+r_2)i\rfloor  \right)\nonumber \\
  &\leq&   \left(r_1^{\alpha-1}+ r_1^{2\alpha-1}+r_2^{\alpha-1}+ r_2^{2\alpha-1}  \right) \bar L(i) i^{6\alpha -3}.\label{quartetest}
   \end{eqnarray}
\end{lemma}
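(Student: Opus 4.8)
The plan is to mimic the strategy of Lemma~\ref{prop:3er}, now distinguishing the shapes of the ancestral tree of the four individuals $0$, $i$, $(1+r_1)i$ and $(1+r_2)i$ on the event $\{0\sim i \sim (1+r_1)i \sim (1+r_2)i\}$. Working with the independent (non-merging) lineages $(A_k)$ from \eqref{defA} together with the pairwise meeting sets $A_{k,\ell}$ and their forward lineages $B_{k,\ell}$, I would organise the tree according to which coalescence event is the most recent. A quartet tree coalesces in three successive merges, so there are finitely many possible topologies (the distinct ways of pairing up and successively joining four lineages). For each topology I bound the probability that the observed lineages realise that shape by the expected size of a suitable intersection of a $B$-set with an $A$-set or of two $B$-sets, using that whenever two lineages coalesce their merged continuation can be dominated by the union of forward lineages emanating from \emph{all} their common intersection points. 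By subadditivity the quartet coalescence probability is bounded by the finite sum of these topology-wise contributions.

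Next I would translate each such expectation into a multiple renewal-weight sum. Each pairwise meeting contributes a factor of the form $\sum_m q_m q_{m+\text{(gap)}}$ (exactly as the pair coalescence probability was expressed through $\sum_m q_m q_{m+i}$ in \eqref{PE}), and the successive merges stack these into nested double or triple sums over the excess lengths, with the relevant gaps being combinations of $i$, $r_1 i$ and $r_2 i$. Concretely I expect each topology to produce a product of two or three renewal convolutions, and the structure should be the natural ``square of the pair computation'' from Lemma~\ref{prop:3er}: two independent meeting events each contributing roughly $i^{2\alpha-1}$, with a third meeting contributing the extra $i^{2\alpha-1}$, for a total order $i^{6\alpha-3}$.

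I would then evaluate the asymptotics of these sums exactly as in the proof of Proposition~\ref{PropLemma3c} and Lemma~\ref{prop:3er}: first under the special Karamata form \eqref{eq2} of the slowly varying function, where the sums become Riemann sums converging to iterated integrals of the shape $\int x^{\alpha-1}(x+r)^{\alpha-1}\cdots$, and then for general $L$ via the Karamata representation \eqref{eq17}. The dependence on $r_1, r_2$ should separate out, with the singular behaviour as a gap ratio tends to $0$ giving the factors $r_j^{\alpha-1}$ and $r_j^{2\alpha-1}$ recorded in \eqref{quartetest}; the two exponents correspond, as in Lemma~\ref{prop:3er}, to whether the relevant meeting point is close to an endpoint (contributing the $q_i q_{\cdots}$ term) or spread out (contributing the integral term). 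Absorbing the various constants and the reciprocal powers of $L$ into a single dominating slowly varying function $\bar L$ not depending on $r_1, r_2$ yields the stated bound.

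The main obstacle I anticipate is bookkeeping: controlling the combinatorial explosion of quartet topologies and, for each, correctly identifying which $B$-set intersected with which $A$-set (or which pair of $B$-sets) dominates the corresponding tree shape, so that the subadditive bound is both valid and tight enough to give $i^{6\alpha-3}$ rather than a worse power. A secondary technical subtlety, already visible in the ``$m=i$'' boundary term in Lemma~\ref{prop:3er}, is that certain diagonal contributions (where one excess length forces a gap to vanish) must be separated off and shown to be of strictly lower order, so that the leading asymptotics is governed by the integral terms and the singularities in $r_1, r_2$ appear exactly as claimed.
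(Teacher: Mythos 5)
Your proposal takes essentially the same route as the paper: there, too, one decomposes the quartet event by tree topology (twelve ordered tuples $(k_1,k_2,k_3,k_4)$ of the four individuals), bounds each contribution by the expected size of an intersection of decoupled-lineage sets — the paper uses the nested set $B_{[k_1,k_2]k_3}$ (lineages emanating from points where $B_{k_1,k_2}$ meets $A_{k_3}$) intersected with $A_{k_4}$, a uniform variant of your ``$B$-set with $A$-set or two $B$-sets'' device — and then evaluates the resulting renewal convolutions exactly as in Lemma~\ref{prop:3er} and Proposition~\ref{PropLemma3c}, absorbing constants and reciprocal powers of $L$ into a single dominating slowly varying $\bar L$. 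The only cosmetic difference is that the paper treats all topologies with the one expression $\EW\left[\left\vert B_{[k_1,k_2]k_3}\cap A_{k_4}\right\vert\right]$ instead of distinguishing caterpillar from cherry shapes as you do.
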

\begin{proof}[Proof]
  Again let $(A_i)$ be the independent (non-merging) ancestral lineages defined in \eqref{defA}.
Let
  $B_{k, \ell}$ be as in
\eqref{defBkl}
and set
  \begin{equation*}
  B_{[k,\ell]j}:=  \bigcup_{ m \in B_{k,\ell} } \bigcup_{ g \in A_m \cap A_j } A_g.
\end{equation*}
We fix $i\in\N$ and set  with $j_1:= (1+r_1)i$, \, $j_2 := (1+r_2)i$, 
\begin{eqnarray*}Q:= &&\{(0,i,j_1, j_2),(0,i,j_2,j_1), (0,j_1,i,j_2),  (0,j_1,j_2,i),\\&& (0,j_2,i,j_1),(0,j_2,j_1,i), (i,j_1,0,j_2), (i,j_1,j_2,0),\\&& (i,j_2,0,j_1), (i,j_2,j_1,0), (j_1,j_2,0,i),(j_1,j_2,i,0)\}
\end{eqnarray*}
We now argue in a similar way as in the proof of Proposition \ref{prop:3er}. By subadditivity we get
\begin{eqnarray*}
 \WS\left( 0 \sim i\sim (1+r_1)i \sim (1+r_2) \right)
  \leq \sum_{(k_1,k_2,k_3,k_4) \in Q} \EW\left[ \vert B_{[k_1,k_2]k_3} \cup A_{k_4} \vert \right].      \end{eqnarray*}
By the very same arguments as in the proof of Lemma \ref{prop:3er} one checks that each of the twelve summands is bounded by the right-hand side of \eqref{quartetest}.
\end{proof}
\begin{proof}[Proof of Proposition \ref{3and4} Part 2]
We  are now going to prove \eqref{quadr}, and first set out to show that for all $\delta > 0$
   \begin{equation}\label{sumprob}
     \sum_{i=1}^n \sum_{j=1}^{n-i} \sum_{\ell=1}^{n-i-j} \WS\left( 0 \sim i \sim i+j \sim i+j+\ell \right)= O\left(n^{6\alpha+\delta}\right).
   \end{equation}
   Setting $r_1=\tfrac{j}{i}, r_2 = \tfrac{j+l}{i}$ we have $r_1^{\alpha-1}+ r_1^{2\alpha-1}+r_2^{\alpha-1}+ r_2^{2\alpha-1} \le 4(r_1^{\alpha-1}+1)$ and by Lemma~\ref{prop:4er} the left-hand side of \eqref{sumprob}  is bounded from above by
   \begin{equation*}\label{boundsum}
    C  \sum_{i=1}^n \bar L(i) i^{6\alpha -3} \sum_{j=1}^{n-i} \sum_{\ell=1}^{n-i-j} 
    \left[ 
    \left( \frac{j}{i}\right)^{\alpha-1} +  1
    \right].
   \end{equation*}
   By  arguments analogous to those leading to \eqref{eq63} in the proof of Part 1, this implies \eqref{sumprob}.
    As in the proof of Part 1  we can argue that, as $n\to \infty$   the terms
\begin{equation*}
    \sum_{i=0}^n \sum_{j=0}^n \sum_{\ell=0}^n \WS\left( k \sim i \sim j \sim \ell  \right)
  \end{equation*}
  are of the same order uniformly in for all $k\in \{0,1,\ldots, n\}$, so it is enough to look at the case $k=0$. Also, from \eqref{assim} and \eqref{trip} it is clear that we may  restrict to pairwise distinct $i, j, \ell$. Thus, similar as in Part 1,  \eqref{quadr} follows from \eqref{sumprob}.    \end{proof}
 \section{A covariance estimate: Proof of Lemma \ref{Covest}}\label{ProofCovest}
  For $i=j$ or $k=\ell$ the assertion of Lemma \ref{Covest} is clearly true because then the left-hand side. of \eqref{Covineq} vanishes. For $i=k$ we have
  \[ \Cov[I_{\{i\sim j\}}, I_{\{i\sim \ell\}}] = \WS(i\sim j\sim \ell)-\WS(i\sim j) \WS(i\sim \ell) \le \WS(i\sim j \sim \ell).\]
  Hence we may assume without loss of generality that $i,j,k,\ell$ are pairwise distinct. 
We then have
  \begin{equation}
    \Cov\left[ I_{ i\sim k },  I_{ j\sim \ell } \right]= \WS\left(   i \sim k \cap j \sim \ell \right)-  \WS\left(   i \sim k  \right) \WS\left(  j \sim \ell \right)\label{eq:covquado}
  \end{equation}
  and
  \begin{equation}\label{eq:decomquadro}
    \WS\left(   i \sim k \cap j \sim \ell \right) = \WS\left(   i \sim k \not \sim  j \sim \ell \right)  + \WS\left(   i \sim k \sim  j \sim \ell \right). 
  \end{equation}
 By \eqref{eq:covquado} and \eqref{eq:decomquadro}, the inequality \eqref{Covineq} is immediate from the following  \begin{lemma}
    For pairwise distinct $i,j,k,\ell$ we have
    \begin{equation*}\label{beh1}
    \WS\left( i  \sim k\not \sim j \sim \ell \right) \le \WS\left( i \sim k  \right) \WS\left( j \sim \ell  \right)
  \end{equation*}
  \end{lemma}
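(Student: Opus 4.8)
The plan is to reformulate connectivity through the coalescing ancestral lineages and then to reveal the lineage of the pair $\{i,k\}$, exploiting the independence of the increments $(R_m)$ off the revealed set to decouple the second pair. For $a\in\mathbb{Z}$ write $\mathcal{L}_a:=\{a,\,a-R_a,\,a-R_a-R_{a-R_a},\ldots\}$ for the ancestral lineage of $a$ in $\mathcal{G}_\mu$. Since every vertex has out-degree one and each edge strictly decreases the index, $\mathcal{G}_\mu$ is a forest and $a\sim b$ holds precisely when $\mathcal{L}_a\cap\mathcal{L}_b\neq\emptyset$; once two lineages meet they coincide from the meeting point on. Put $A:=\{i\sim k\}$, $B:=\{j\sim\ell\}$ and $C:=\{k\not\sim j\}$, so the claim reads $\WS(A\cap B\cap C)\le\WS(A)\WS(B)$. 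On $A$ let $V:=\mathcal{L}_i\cup\mathcal{L}_k$ be the merged lineage of $\{i,k\}$, a ``$Y$''-shaped set made up of the two finite branches from $i$ and $k$ to their most recent common ancestor together with the common ray emanating from it.

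First I would record the \emph{self-determination} of $V$: tracing $\mathcal{L}_i$ and $\mathcal{L}_k$ only consults the values $R_m$ with $m\in V$, so both $V$ and the event $A$ are measurable with respect to $\mathcal{F}_V:=\sigma\!\big(V,(R_m)_{m\in V}\big)$, and, crucially, conditionally on $\mathcal{F}_V$ the family $(R_m)_{m\notin V}$ is again i.i.d.\ with law $\mu$. Next I would verify that on $A\cap C$ the lineages $\mathcal{L}_j$ and $\mathcal{L}_\ell$ avoid $V$ altogether: if $\mathcal{L}_j$ met $V$ it would either meet $\mathcal{L}_k$ directly or first hit the $i$-branch and thereafter run into the common ray (hence into $\mathcal{L}_k$), each case forcing $j\sim k$ and contradicting $C$; the same applies to $\ell$. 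In particular $j,\ell\notin V$, which is where distinctness of the four indices enters.

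The heart of the argument is the pointwise bound $\WS(B\cap C\mid\mathcal{F}_V)\le\WS(j\sim\ell)$ on $A$, which I would establish by a coupling. Given $\mathcal{F}_V$, take fresh i.i.d.\ $\mu$-variables $(R'_m)_{m\in V}$ and form the field $R^{\mathrm{free}}$ equal to $R_m$ off $V$ and to $R'_m$ on $V$; then $R^{\mathrm{free}}$ is i.i.d.\ $\mu$ and independent of $\mathcal{F}_V$, so the lineages it generates satisfy $\WS(j\sim\ell\text{ via }R^{\mathrm{free}}\mid\mathcal{F}_V)=\WS(j\sim\ell)$. On $B\cap C$ the true lineages $\mathcal{L}_j,\mathcal{L}_\ell$ avoid $V$ and therefore only consult values off $V$, where $R^{\mathrm{free}}$ agrees with $R$; hence the $R^{\mathrm{free}}$-lineages started at $j$ and $\ell$ follow the identical trajectories, likewise avoid $V$, and likewise coalesce. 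Thus $B\cap C$ is contained in $\{j\sim\ell\text{ via }R^{\mathrm{free}}\}$ under the coupling, giving the pointwise bound. Since $A\in\mathcal{F}_V$, integrating yields
\[
\WS(A\cap B\cap C)=\EW\!\big[I_A\,\WS(B\cap C\mid\mathcal{F}_V)\big]\le\WS(j\sim\ell)\,\WS(A),
\]
which is the assertion.

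I expect the main obstacle to be the rigorous handling of the conditioning on the random, and typically infinite, self-determined set $V$ — namely justifying that $A$ and $V$ are $\mathcal{F}_V$-measurable and that $(R_m)_{m\notin V}$ remains i.i.d.\ given $\mathcal{F}_V$. This is a standard explore-as-you-go argument, but it should be set up carefully (for instance via a sequential exploration of the two lineages) so that no value off $V$ is ever consulted; once this is in place, the coupling and the final integration are routine. Note that the argument uses only the independence of the increments, never the tail assumptions \eqref{basic} and \eqref{nonlattice}, in agreement with the remark following the lemma.
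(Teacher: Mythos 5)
Your proof is correct, but it runs in the opposite direction from the paper's. The paper starts from the independent, \emph{non-coalescing} renewal lineages $A_g$ of \eqref{defA} and builds the true coalescing system out of them via an inductive ``lookdown'' coupling $G^{\{i,j,k,\ell\}}$ (the lineage of a later individual is rerouted onto the already-built graph at its first collision): on the event $\{i\sim k\not\sim j\sim\ell\}$ the two components consult disjoint pairs of independent lineages, so the event is contained in $\{A_i\cap A_k\neq\emptyset\}\cap\{A_j\cap A_\ell\neq\emptyset\}$, whose probability factorises by the mutual independence of the $A_g$. You instead stay inside the single field $(R_m)_{m\in\Z}$, explore the merged lineage $V$ of $\{i,k\}$, invoke the spatial Markov property (the unexplored increments remain i.i.d.\ given $\mathcal F_V$), and manufacture the independent comparison object by resampling $R$ on $V$; your inclusion $A\cap B\cap C\subseteq A\cap\{j\sim\ell\text{ via }R^{\mathrm{free}}\}$ plays exactly the role of the paper's dropping of the disjointness event. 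What each buys: the paper's forward coupling avoids any conditioning on the random infinite set $V$ --- the burden you rightly flag as your main obstacle, since the exploration of two infinite rays never terminates and the conditional i.i.d.\ property off $V$ must be obtained by a limiting argument over finite exploration stages --- at the price of verifying that the lookdown graph reproduces the law of the coalescing lineages; your conditional-resampling route is more local and yields the stronger pointwise bound $I_A\,\WS(B\cap C\mid\mathcal F_V)\le I_A\,\WS(j\sim\ell)$. One small imprecision to repair: your preliminary claim that already on $A\cap C$ \emph{both} $\mathcal L_j$ and $\mathcal L_\ell$ avoid $V$ is not quite right --- on $A\cap C$ the lineage $\mathcal L_\ell$ may well hit $V$ (take $\ell\sim k$ but $j\not\sim\ell$); avoidance for $\ell$ requires $B$ as well, since only then does $\ell\sim k$ force $j\sim k$ and contradict $C$. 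Where you actually use the avoidance, namely on $B\cap C$ inside $I_A$, the statement is valid, so the argument goes through. Like the paper's proof, yours uses only the independence of the increments, in agreement with the remark that \eqref{Covineq} needs neither \eqref{basic} nor \eqref{nonlattice}.
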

  \begin{proof}[Proof]
 Let $A_g$, $g\in \mathbb Z$, be defined as in \eqref{defA}.   For $m \in \mathbb N$ and  \mbox{$i_1 < \ldots <  i_m \in \mathbb Z$} we define an $(A_{i_1}, \ldots, A_{i_m})$-measurable random graph $G^{\{i_1,\ldots,i_m\}}$ which is equal in distribution to the subgraph of $\mathcal G_\mu$ that is formed by the (possibly coalescing) ancestral lineages of $i_1, i_2, \ldots, i_m$. The construction of $G^{\{i_1,\ldots,i_m\}}$ is done inductively in the following ``lookdown'' manner: the ancestral lineage of $i_1$ is taken as $A_{i_1}$, correspondingly, we put   $ G^{\{i_1\}}:= A_{i_1}$. The ancestral lineage of $i_{h+1}$ is given by  $A_{i_{h+1}}$ as long as the latter did not  meet $G^{\{i_1,\ldots, i_h\}}$. At the time of the first (seen in backward time direction) collision of $A_{i_{h+1}}$ with  $G^{\{i_1,\ldots, i_h\}}$, the ancestral lineage of $i_{h+1}$ is continued by the lineage in $G^{\{i_1,\ldots, i_h\}}$ that starts in the meeting point (and the continuation of $A_{i_{h+1}}$ from there on is erased). An inspection of  $G^{\{i,j,k,\ell\}}$ reveals that
   \begin{eqnarray*}
&&\WS\left( \left\{ i \sim k \not \sim j \sim \ell  \right\} \right) 
\\
&=&  \WS\left(\left\{ A_i \cap A_k \not=\emptyset \right\}\cap \left\{ A_j \cap A_\ell \not=\emptyset \right\}\cap \left\{ G^{\{i, k\}} \cap  G^{\{j, \ell\}} = \emptyset \right\}\right)\\
&\le&  \WS\left(\left\{ A_i \cap A_k \not=\emptyset \right\}\cap \left\{ A_j \cap A_\ell \not=\emptyset \right\}\right),
   \end{eqnarray*}
     which because of mutual independence of the $A_g$ gives the assertion of the lemma.
   \end{proof}
   \section{Asymptotic Gaussianity in the Hammond-Sheffield urn: Proof of Theorem \ref{mainth}\ref{theoremmain:B}}\label{secmainproof}
  We are going to apply Theorem \ref{asGausscolored}, with $\mathscr P^{(m)}$ being  the partition on $[m]$ that is generated by the Hammond-Sheffield urn, i.e. by the equivalence class $\sim$ defined in \eqref{simdef}. For $d \in \N$ and $t_1 < \cdots < t_d$ as prescribed in Theorem \ref{mainth} we apply Theorem \ref{asGausscolored} with $m=m(n)=\lfloor t_d n\rfloor$ and $\rho_g := \lfloor t_g n\rfloor/\lfloor t_{d} n\rfloor$, $g=1,\ldots, d$.
Under condition \ref{theoremmain:B} of Theorem \ref{mainth}, Corollary~\ref{cor3and4} ensures the validity of assumptions  \eqref{drei} and \eqref{vier}, and Lemma~\ref{Covest} guarantees that \eqref{Covineqn} is fulfilled. It remains to check the assumption \eqref{variancecomp}.
Indeed, with 
\begin{equation*} \label{defax}
a(x) := \alpha_g \mbox{ if } \rho_{g-1}  < x \le \rho_{g},    \quad   x \in (0,1],   \quad g=1,\ldots, d,
\end{equation*}
because of  $\WS(i\sim j)= \WS(0\sim |i-j|) $ and in view of Proposition~\ref{probrel}   the left-hand side. of \eqref{variancecomp} has the asymptotics 
\begin{equation} \label{Riemannagain}
m^{-2}\sum_{i,j\in [m]}a^{(m)}_ia^{(m)}_j \WS(i\sim j) \sim \frac{C m^{2\alpha-1}}{ L(m)^{2}}   \int_0^1 \int_0^1 a(x)a(y) |x-y|^{2\alpha-1}\mathrm{d}x\, \mathrm{d}y \quad \mbox{as } m\to \infty
\end{equation}
The Riesz kernel $|x-y|^{2\alpha-1}$ is positive definite (see e.g. \cite{rieszkernel}), hence the integral term in \eqref{Riemannagain} is strictly positive, and consequently the left-hand side of \eqref{Riemannagain} is of the order $m^{2\alpha+1}$ as $m\to \infty$. Because of ~\eqref{asvar}, this is also the order of the right-hand side of  \eqref{variancecomp}.
\section{Tightness: Proof of Proposition \ref{proptight}}\label{tight}
Inspired by the proof of Theorem 1 in \cite{MR1849425}, which shows tightness of a different approximation scheme for fractional Brownian motion, we will make use of the following
\begin{lemma}[{\cite[Theorem~13.5]{measures2}}]\label{satz:billingsley}  Let $T>0$ and 
 $(\zeta^{(n)}(t))_{0\le t\le T}$  be continuous processes that converge to a continuous process $(\zeta(t))_{0\le t\le T}$  in the sense of finite dimensional distributions. Assume  that for  a nondecreasing continuous function $F$ on $[0,T]$, for some $\gamma> 1$, for all $0\leq s\leq t \leq u\le T$, for some  $N\in\N$ and all $n \geq N$
  \begin{equation}
    \label{eq:82}
    \EW\left[ \left| \zeta^{(n)}(t) -\zeta^{(n)}(s) \right| \cdot \left| \zeta^{(n)}(u)- \zeta^{(n)}(t)\right|   \right] \leq \left[  F(u)-F(s) \right]^{ \gamma}.
  \end{equation}
 Then $(\zeta^n(t))_{0\le t\le T}$ converges in distribution in $\left(C\left([0,T]\right), || \cdot ||_{L^\infty([0,T])}\right)$ to $(\zeta(t))_{0\le t\le T}$. 
\end{lemma}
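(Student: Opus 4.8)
The plan is to obtain the assertion as an instance of Prohorov's theorem: weak convergence in $\left(C([0,T]),\|\cdot\|_{L^\infty([0,T])}\right)$ is equivalent to the combination of (i) convergence of finite-dimensional distributions and (ii) tightness of the laws of the $\zeta^{(n)}$. Since (i) is part of the hypothesis, and since the prospective limit $\zeta$ is assumed continuous, any subsequential weak limit produced by tightness must share the finite-dimensional distributions of $\zeta$ and hence coincide with $\zeta$ in law. The entire task therefore reduces to deriving tightness of $\{\zeta^{(n)}\}$ in $C([0,T])$ from the moment bound \eqref{eq:82}. For this I would use the standard criterion (the measure-level Arzel\`a--Ascoli theorem): the family is tight if the marginals at a fixed time are tight --- here $\zeta^{(n)}(0)\Rightarrow\zeta(0)$ by (i), so this holds --- and if the processes are asymptotically equicontinuous, i.e.\ for every $\varepsilon>0$,
$$\lim_{\delta\downarrow 0}\ \limsup_{n\to\infty}\ \WS\!\left(w\big(\zeta^{(n)},\delta\big)\ge\varepsilon\right)=0,$$
where $w(x,\delta):=\sup_{|t-s|\le\delta}|x(t)-x(s)|$ is the oscillation modulus. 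Everything then hinges on controlling this modulus uniformly in $n\ge N$.

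The first concrete step is to pass from moments to probabilities. Applying Markov's inequality to the product appearing in \eqref{eq:82}, one gets, for all $0\le s\le t\le u\le T$ and all $\lambda>0$,
$$\WS\!\left(\min\big\{|\zeta^{(n)}(t)-\zeta^{(n)}(s)|,\,|\zeta^{(n)}(u)-\zeta^{(n)}(t)|\big\}\ge\lambda\right)\ \le\ \frac{1}{\lambda^{2}}\,\big[F(u)-F(s)\big]^{\gamma}.$$
This two-increment tail bound is precisely the input required by the oscillation (maximal) inequality that forms the core of Billingsley's development. The mechanism is a dyadic bisection/chaining argument over $[0,T]$: one partitions a generic interval of length $\delta$ into successively finer subintervals, bounds the maximal fluctuation at each level by the displayed two-point estimate, and sums the contributions across the blocks. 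The decisive role of the assumption $\gamma>1$ is that $[F(u)-F(s)]^{\gamma}$ is superadditive over adjacent subintervals (since $x\mapsto x^{\gamma}$ is superadditive on $[0,\infty)$ for $\gamma\ge 1$), so that, writing $\mathrm{osc}_\delta F:=\sup_{|u-s|\le\delta}|F(u)-F(s)|$ and using $\sum_i a_i^{\gamma}\le(\max_i a_i)^{\gamma-1}\sum_i a_i$, the blockwise bounds telescope into a modulus estimate of the form
$$\WS\!\left(w\big(\zeta^{(n)},\delta\big)\ge\varepsilon\right)\ \le\ \frac{K}{\varepsilon^{2}}\,\big(\mathrm{osc}_\delta F\big)^{\gamma-1}\,F(T),\qquad n\ge N,$$
with a universal constant $K=K(\gamma)$.

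Finally I would let $\delta\downarrow 0$: because $F$ is continuous on the compact interval $[0,T]$ it is uniformly continuous, so $\mathrm{osc}_\delta F\to 0$, and since $\gamma-1>0$ the right-hand side tends to $0$ uniformly in $n\ge N$. This verifies the asymptotic equicontinuity, hence tightness, and together with (i) and Prohorov's theorem yields the claimed weak convergence to $\zeta$. The genuinely substantial step is the oscillation inequality of the preceding paragraph --- the passage from a bound on products of two \emph{disjoint} increments to a bound on the \emph{supremum} of the fluctuations over a short interval. This is exactly the content of Billingsley's Theorems~12.3 and~13.5, and I would either reproduce that chaining argument in full or, as the present paper does, simply cite it; the remaining ingredients (Markov's inequality, the equicontinuity characterisation of tightness, and Prohorov's theorem) are entirely routine.
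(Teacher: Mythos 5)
The paper offers no proof of this lemma at all: it is imported verbatim from Billingsley \cite[Theorem~13.5]{measures2}, exactly as you acknowledge at the end of your proposal, and your sketch faithfully reproduces the standard argument from that source (Markov's inequality on the product of increments, the dyadic fluctuation inequality, the superadditivity afforded by $\gamma>1$, and uniform continuity of $F$). The only slight imprecision is that the two-increment bound controls Billingsley's minimum-of-increments modulus $w''$ rather than the uniform modulus $w$ directly --- the block-endpoint increments are not controlled by \eqref{eq:82} alone and are absorbed via the finite-dimensional convergence and the continuity of the limit --- but since you defer that chaining step to Billingsley in any case, your approach is essentially the same as the paper's.
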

\begin{proof}[Proof of Proposition~\ref{proptight}]
We first note the following immediate consequence of \eqref{sigman} and \eqref{asvar}:
  There exist constants $c, c'>0$ such that
  \begin{equation}
    \label{eq:69}
    c n^{2\alpha+1} L(n)^{-2} \le \sigma_n^2 \le c' n^{2\alpha+1} L(n)^{-2}, \quad \forall n\in\N. 
  \end{equation}
Next we fix  $0\leq s \leq t \leq u \leq T$ and $j,k,l\in\N$ satisfying
  \begin{equation*}
    \label{eq:48}
    \frac{j}{n} \leq s < \frac{j+1}{n},\quad \frac{k}{n} \leq t < \frac{k+1}{n}, \quad \frac{l}{n} \leq u < \frac{l+1}{n}.
  \end{equation*}
The definition of $S^{(n)}$ as a linear interpolation gives
  \begin{equation*}
    \label{eq:55}
  \sigma_n(S^{(n)}(t)- S^{(n)}(s))  = \sum_{m=j+1}^k Y_m + \left[ 1- \left( sn -j \right) \right]Y_{j+1} + \left[ tn -k \right] Y_{k+1}.
  \end{equation*}
  We note that
  $0\leq  \left[ 1- \left( sn -j \right) \right]\leq 1$, $0\leq \left[ tn -k \right]  \leq 1$ and
get that for some constants $c_1, c_2, c_3, c_4$
  \begin{eqnarray}
    \label{eq:72}
    \Var\left[ S^{(n)}(t) - {S}^{(n)}(s) \right]&\leq& \left[ 2\Var\left[  S^{(k-j)} \right] +2 c_{1} \Var\left[ Y_{j+1} + Y_{k+1} \right] \right] \sigma_n^{-2}\nonumber\\
                                                    &\leq& \left[ 2 c' (k-j)^{2\alpha+1} L(k-j)^{-2} + c_{2} \right] \ \sigma_n^{-2} \nonumber\\
                                                    &\leq& \left[ c_3(k-j)^{2\alpha+1} L(k-j)^{-2} \right]  \sigma_n^{-2}  \nonumber\\
    &\leq& c_4 \left( \frac{k-j}{n} \right)^{2\alpha+1} \left( \frac{ L(n) }{L(k-j)} \right)^2. \label{eq:72_3}
  \end{eqnarray}
  Here the first inequality holds because for any two square-integrable random variables $G_1$, $G_2$ one has $\Var\left[ G_1+G_2 \right] \leq 2\Var[G_1]+2 \Var[G_2]$, the    second and the last inequality hold because of \eqref{eq:69}, and the third one holds for some $c_3>0$, some  $N_1\in\N$ and all $n \ge N_1$ because $L$ is a slowly varying function. (Remember that $k,j,l$ depend on $n$ for fixed time points $s,t,u$.)  Analogously,
  \begin{equation}
    \label{eq:75}
    \Var\left[ S^{(n)}(u)- S^{(n)}(t) \right]\leq
    c_3\left( \frac{l-k}{n} \right)^{2\alpha+1} \left( \frac{ L(n) }{L(l-k)} \right)^2 .
  \end{equation}
  To use Lemma~\ref{satz:billingsley} we have to bound the expectation
  \begin{equation*}
    \label{eq:73}
    \EW\left[ \left| S^{(n)}(t) -  S^{(n)}(s)  \right| \cdot \left|  S^{(n)}(u) -  S^{(n)}(t)  \right| \right].
  \end{equation*}
  For $l\leq j+1$ we get that $|u-t|\leq \frac2n$ and $|t-s|\leq \frac2n$, so that by basic calculus $(u-t)(t-s)\leq (u-s)^{1+\alpha}$. By the definition of $\mu$ and the fact that $L$ is slowly varying one  has $L(n)\leq n^{-\alpha}$ for $n$ large enough. So linear interpolation gives:
  \begin{eqnarray*}
    \label{eq:13}
    &&\EW\left[ |S^{(n)}(t)- S^{(n)}(s)| \cdot | S^{(n)}(u)-S^{(n)}(s)| \right]\\
    &\leq& \left( \tilde{c} n^{-\frac12-\alpha}L(n) \right)^2  \left( (t-s) \cdot (u-s) \right)\\
    &\leq& \left( \tilde{c} n^{-\frac12} \right)^2  (u-s)^{1+2\alpha} 
    \, =\, \tilde{c}^2 \frac1n  (u-s)^{1+2\alpha}\\
    &\leq& \tilde{c}^2  (u-s)^{1+2\alpha}\, = \, \left[ c_0 u - c_0 s \right]^{1+2\alpha}
  \end{eqnarray*}
  for some $c_0>0$.
   Now assume  $l>j+1$. Cauchy-Schwarz and the estimates \eqref{eq:72_3}, \eqref{eq:75} yield
   \begin{eqnarray}
     \label{eq:74}
     &&\EW\left[  \left|  S^{(n)}(t) -  S^{(n)}(s)   \right| \cdot \left|  S^{(n)}(u) -  S^{(n)}(t)  \right|  \right]\nonumber\\
     &\leq& \sqrt{ \Var \left[  S^{(n)}(t) -  S^{(n)}(s) \right] }\sqrt{ \Var \left[  S^{(n)}(u) -  S^{(n)}(t) \right] }\nonumber\\
     &\leq&\sqrt{c_3 \left( \frac{k-j}{n} \right)^{2\alpha+1} \left( \frac{ L(n) }{L(k-j)} \right)^2 } \sqrt{ c_3 \left( \frac{l-k}{n} \right)^{2\alpha+1} \left( \frac{ L(n) }{L(l-k)} \right)^2}\nonumber\\
     &\leq& c_3\left( \frac{l-j}{n} \right)^{2\alpha+1} \left( \frac{L(n)}{L(k-j)L(l-k)} \right)^2 \nonumber\\
     &\leq& 2c_3\left( \frac{l-j}{n} \right)^{2\alpha+1} \le\quad   \left(2c_3 \cdot 2(u-s) \right)^{2\alpha+1} . \nonumber
   \end{eqnarray}
  (The third inequality holds because of $4|k-j| \cdot |l-k| \leq |l-j|^2$, and the fourth one holds for $n\geq N_2$ for some  $N_2 \in\N$ because $L$ is a slowly varying function.) 
  
  Thus,   $\zeta^{(n)}:= S^{(n)}$ fulfills condition \eqref{eq:82} with with $N:= \max\{ N_1, N_2 \}$, $\gamma := \alpha + 1/2$ and $F(x):= 4 \max\{c_0, c_3\} x$. In view of Corollary~\ref{findim}  we can thus apply Lemma~\ref{satz:billingsley} and conclude the assertion of Proposition \ref{proptight}.
 \end{proof}
 \begin{remark}\label{HSLemma} In \cite{HS} the functional convergence of $S^{(n)}$ was deduced from a  tail estimate (uniform in $n$) on $\max_{t\in [0,1]} S^{(n)}_t$, stated in \cite[Lemma 4.1]{HS}. The proof of this lemma given there relies on the statement that for a certain sequence $(r_n)$ the inequalities (4.11) in \cite{HS} imply boundedness of $(r_n)$. There are, however, examples of unbounded sequences which fulfill these inequalities. Still, things clear up nicely because the assertion of \cite[Lemma 4.1]{HS} is a quick consequence of  Corollary \ref{funcconv} and the Borell-TIS inequality.
 \end{remark}

 \section{Coalescence probabilities in long-range seedbanks}\label{secBGKS}
 In this section we will assume that the weights $q_n$ of the renewal measure defined in \eqref{defqn} obey the asymptotics
\eqref{qas}, see the discussion after equation~\eqref{qas}.
Following \cite{BGKS} we extend the model described in Section \ref{secIntro} as follows. For fixed $N \in \mathbb N$, the set of vertices of  $\mathcal G_\mu$ is now $\mathbb Z \times [N]$. The set of those vertices whose first component is $i$ constitutes the population of individuals living at time $i$. The parent of the individual $(i,k)$ is $(i-R_{i,k}, H_{i,k})$, where the random variables $R_{i,k}$ are independent copies of $R$ and the random variables $H_{i,k}$ are i.i.d.~picks from $[N]$. In words, each individual chooses its parent uniformly from a previous time with delay (or dormancy) distribution~$\mu$.  The corresponding urn model, which goes back to Kaj, Krone and Lascoux (\cite{kkj}), thus specialises to the Hammond-Sheffield urn for $N=1$.

Again we write $(i,k) \sim (j,\ell)$ if the two individuals $(i,k)$ and $(j,\ell)$ belong to the same connected component of $\mathcal G_\mu$.
Thanks to Proposition \ref{PropLemma3c}, which also provides a proof of \cite[Lemma 3.1 (c)]{BGKS}, we arrive at the following analogue of Proposition \ref{probrel} (see also \cite[Theorem~3(c))]{BGKS}:
\begin{proposition}
For all $k, \ell \in [N]$
\begin{equation} \label{LemmaBGKS}
    \WS\left((0,k)\sim (i,\ell)\right)\sim C_{\alpha,N} \frac {i^{2\alpha -1}}{L(i)^2} \quad \mbox{ as  } i\to \infty,  
     \end{equation} 
     where now
      \begin{equation*}\label{CalphaN}
     C_{\alpha,N} := \frac 1{N+\sum_{m\ge 1} q_m^2} \, \frac{\Gamma(1-2\alpha)}{\Gamma(\alpha)\Gamma(1-\alpha)^3}.
     \end{equation*}
\end{proposition}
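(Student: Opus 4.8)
The plan is to mimic the decoupling argument that produced the representation \eqref{connection} in the Hammond--Sheffield case $N=1$, while carefully tracking the extra layer coordinate in $[N]$. First I would introduce, in analogy with \eqref{defA}, the independent \emph{non-merging} ancestral lineages $\hat A_{(i,k)}$ of the individuals $(i,k)\in\mathbb Z\times[N]$: starting from $(i,k)$ the time coordinate performs a renewal process with increment law $\mu$, so that a time $n\le i$ is visited with the usual renewal weight $q_{i-n}$, while at every strictly earlier visited time a fresh, independent uniform layer in $[N]$ is drawn (the layer at the starting time being fixed). Exactly as for $N=1$, the coalescing and the decoupled lineages agree up to their first meeting, so that $\WS\bigl((0,k)\sim(i,\ell)\bigr)=\WS\bigl(\hat A_{(0,k)}\cap\hat A_{(i,\ell)}\neq\emptyset\bigr)$, and it suffices to work in the decoupled picture.

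The core computation is the analogue of \eqref{PE}. I would first evaluate $\EW\bigl[|\hat A_{(0,k)}\cap\hat A_{(i,\ell)}|\bigr]$. Since $\hat A_{(0,k)}$ lives on times $\le 0$, any common vertex $(n,h)$ has $n\le 0<i$; both lineages reach time $n$ with probability $q_{-n}\,q_{i-n}$ by independence, and at \emph{every} such time the two layers (fresh uniform for $n<0$, and fixed $k$ versus fresh uniform at the endpoint $n=0$) coincide with probability exactly $1/N$. Summing over $n$ therefore gives, independently of $k$ and $\ell$,
\[
\EW\bigl[|\hat A_{(0,k)}\cap\hat A_{(i,\ell)}|\bigr]=\frac1N\sum_{m\ge0}q_m\,q_{m+i}.
\]
Next, decomposing with respect to the most recent (largest-time) meeting point and noting that from a common vertex the two lineages proceed as independent renewals with fresh uniform layers, the expected number of coincidences issued from a shared vertex (including the starting one) is
\[
1+\frac1N\sum_{s\ge1}q_s^2=\frac{N+\sum_{s\ge1}q_s^2}{N}.
\]
Dividing the two displays precisely as in the passage \eqref{PE}--\eqref{connection} yields the closed form
\[
\WS\bigl((0,k)\sim(i,\ell)\bigr)=\frac{\sum_{m\ge0}q_m\,q_{m+i}}{N+\sum_{s\ge1}q_s^2},
\]
which reduces to \eqref{connection} when $N=1$, where $q_0^2=1$.

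Finally I would insert the asymptotics of the numerator. Writing $\sum_{m\ge0}q_mq_{m+i}=q_i+\sum_{m\ge1}q_mq_{m+i}$ and observing that $q_i=O\bigl(i^{\alpha-1}/L(i)\bigr)$ by \eqref{qas} is of strictly smaller order than $i^{2\alpha-1}/L(i)^2$, Proposition~\ref{PropLemma3c} together with the Beta-integral evaluation gives
\[
\sum_{m\ge0}q_m\,q_{m+i}\sim\frac{\Gamma(1-2\alpha)}{\Gamma(\alpha)\Gamma(1-\alpha)^3}\,\frac{i^{2\alpha-1}}{L(i)^2}.
\]
Dividing by $N+\sum_{s\ge1}q_s^2$ produces exactly $C_{\alpha,N}\,i^{2\alpha-1}/L(i)^2$, which is \eqref{LemmaBGKS}. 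I expect the only genuinely delicate point to be the bookkeeping of the $1/N$ layer-matching factors, in particular verifying that the prescribed (rather than freshly sampled) layers at the two endpoints $(0,k)$ and $(i,\ell)$ still contribute the same factor $1/N$, so that the intersection count remains free of $k,\ell$; once the expected-intersection and re-meeting counts are correctly assembled, the statement follows from the renewal asymptotics already established.
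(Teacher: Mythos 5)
Your proposal is correct and follows essentially the same route as the paper's proof: decoupled non-merging lineages, the computation $\EW\bigl[|\tilde A_{0,k}\cap\tilde A_{i,\ell}|\bigr]=\frac1N\sum_{m\ge0}q_mq_{m+i}$, decomposition at the most recent collision giving the factor $1+\frac1N\sum_{m\ge1}q_m^2$, and then Proposition~\ref{PropLemma3c} with the Beta-integral for the asymptotics. Your extra checks --- that the fixed layers $k,\ell$ at the endpoints still contribute the factor $1/N$, and that the $m=0$ term $q_i=O(i^{\alpha-1}/L(i))$ is of lower order since $i^{-\alpha}L(i)\to0$ --- are points the paper passes over silently, and they are verified correctly.
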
 
\begin{proof}
Denote by 
 $\tilde A_{0,k}$ and $\tilde A_{i, \ell}$  the decoupled ancestral lineages of the individuals $(0,k)$ and $(i,\ell)$ constructed in analogy to \eqref{defA}. Like in \eqref{PE} we observe
  \begin{equation*}
    \EW\left[ |\tilde A_{0,k} \cap \tilde A_{i,\ell}| \right]= \sum_{m\geq 0} \frac1N q_m q_{m+i}.\label{eq:eqtreff1_1}
  \end{equation*}
Decomposing at the most recent collision time of  $\tilde A_{0,k}$ and $\tilde A_{i, \ell}$ we get
  \begin{eqnarray*}
    \EW\left[ \tilde A_{0,k} \cap \tilde A_{0,\ell} \right] = \WS\left(  \tilde A_{0,k} \cap  \tilde A_{i,\ell}\not=\emptyset \right)\cdot \left( 1+ \sum_{m\geq 1}\frac1N q_m^2 \right) . \label{eq:eqtreff2_1}
  \end{eqnarray*}
The last two equalities combine to
  \begin{eqnarray*}
    &&\WS\left( A_{0,k} \cap A_{i,\ell}  \not=\emptyset \right)
        =\frac{ \ \sum_{m\geq 0} q_m q_{m+i}   }{ N+ \sum_{m\geq 1} q_m^2 }.
  \end{eqnarray*}
  The claimed asymptotics \eqref{LemmaBGKS} is now immediate from Proposition~\ref{PropLemma3c}. 
\end{proof}
\begin{remark}
For given natural numbers $n$ and $N$ let 
   $\mathscr I, \mathscr J, \mathscr K$ and \mbox{$\mathscr L$}
    be independent and  uniformly distributed on~$[n]\times [N]$.  In complete analogy to Proposition~\ref{3and4} one can derive that  for all $\delta>0$ and for a constant $C$ not depending on $n$ and $N$
 \begin{eqnarray*}
\WS\left(\mathscr I \sim \mathscr J \sim  \mathscr K\right) \qquad &\le& \frac C{N^2} n^{4\alpha-2+\delta}, \\
\WS\left(\mathscr I \sim \mathscr J \sim  \mathscr K  \sim  \mathscr L\right) &\le& \frac C{N^3} n^{6\alpha-3+\delta}.
 \end{eqnarray*}
 Consequently, along the lines of the proof of Theorem \ref{mainth} one obtains the convergence of an analogue of $\big(S^{(n)}\big)$ towards fractional Brownian motion also in the long-range seedbank model of  \cite{BGKS}.
\end{remark}
\paragraph{\bf Acknowledgment.}
We thank Matthias Birkner, Florin Boenkost, Adrian Gonz\'alez Casanova, Alan Hammond and Nicola Kistler for stimulating discussions and valuable hints. We also thank the \emph{Allianz f\"ur Hochleistungsrechnen Rheinland-Pfalz} for granting us access to the High Performance Computing \textsc{Elwetritsch}, on which simulations have been performed which inspired results in this work. We are also grateful to two anonymous referees whose careful reading and thoughtful comments led to a substantial improvement of the presentation. 
\bibliographystyle{habbrv} 
\bibliography{refs}
\end{document}